\newtheorem{definition}{Definition}[section]
\newtheorem{theorem}{Theorem}[section]
\newtheorem{lemma}[theorem]{Lemma}
\newtheorem{proposition}[theorem]{Proposition}
\newcommand{\nc}{\newcommand}
\newcommand{\mA}{\mathcal{A}}
\nc{\V}{{\cal V}} \nc{\M}{{\cal M}} \nc{\T}{{\cal T}}
\def\u{u}
\nc{\D}{{\cal D}} \nc{\W}{\mathbb W} \nc{\ti}{\tilde}
\nc{\wti}{\widetilde} \nc{\vep}{\varepsilon}
\nc{\R}{{\mathbb R}} \nc{\N}{{\mathbb N}}
\nc{\di}{\displaystyle}
\nc{\pa}{\partial} \nc{\lra}{\longrightarrow}
 \nc{\weak}{\rightharpoonup}
\nc{\weakstar}{\stackrel{\ast}{\rightharpoonup}} \nc{\proof}{{\bf
Proof: }} \nc{\en}{^{\varepsilon,n}} \nc{\Rd}{{{\mathbb R}^{d}}}
\nc{\biting}{\stackrel{\,\,b}{\rightarrow}}
\renewcommand{\div}{{\mathrm{div}}\,}
\nc{\bB}{B}
\nc{\bS}{A}
\newcommand{\vrho}{\varrho}
\nc{\modular}[1]{{\stackrel{ #1}{\longrightarrow\,}}}
\newcommand{\Ref}[1]{{\rm(\ref{#1})}}
\nc{\vd}{\bar{v}} \nc{\zd}{\bar{z}}
\def\bbbone{{\mathchoice {\rm 1\mskip-4mu l}
{\rm 1\mskip-4mu l} {\rm 1\mskip-4.5mu l} {\rm 1\mskip-5mu l}}}
\title{Anisotropic  parabolic problems with slowly or~rapidly growing terms  }
\author{Agnieszka \'Swierczewska-Gwiazda\\[4ex]
{\small Institute of Applied Mathematics and Mechanics}, \\{\small University of Warsaw},\\
{\small Banacha 2, 02-097 Warsaw, Poland}\\ {\small \texttt{aswiercz@mimuw.edu.pl}}}
\begin{document}
\date{}
\maketitle
\begin{abstract}
We consider an abstract parabolic problem in a framework of maximal monotone graphs, possibly multi-valued with growth conditions formulated with help of an $x-$dependent $N-$function.
The main novelty of the paper consists in the lack of any growth restrictions on the ${ N}$--function combined with its anisotropic character, namely we allow the dependence on all the directions of the gradient, not only on its absolute value. This leads us to use the notion of modular convergence and studying in detail the question of density of compactly supported smooth functions with respect to the modular convergence. 

\medskip

\noindent
{\bf AMS 2000 Classification:} 35K55, 35K20 

\noindent
{\bf Keywords}: Musielak -- Orlicz spaces, modular
convergence, nonlinear parabolic inclusion, maximal monotone graph
\end{abstract}

\section{Introduction}
 
Our interest is directed to the phenomenon of anisotropic behaviour in a parabolic problem. 
The proposed approach allows for capturing very general form of growth conditions of a nonlinear term.  
We concentrate on an abstract parabolic problem.
Let $\Omega\subset\R^d$ be an open, bounded set with a ${\cal C}^2$~boun\-da\-ry~$\partial \Omega$,  $(0,T)$ be the time interval with $T<\infty$, $Q:=(0,T)\times\Omega$ and 
${\mathcal A}$ be a maximal monotone graph satisfying the assumptions (A1)--(A5) formulated below. 
Given $f$  and $u_0$ we want to find 
 $u:Q\to\R$ and $ A:Q\to\R^d$ such that 
\begin{align}\label{P1a}
u_t-\div A = f\quad&{\rm in}\ Q,\\
\label{P1aa}
(\nabla u,A)\in{\mathcal A}(t,x)\quad&{\rm in}\ Q,\\
\label{P2a}
u(0,x)=u_0\quad&{\rm in}\ \Omega,\\
\label{P3a}
u(t,x)=0\quad&{\rm on}\ (0,T)\times\partial\Omega.
\end{align}

The main objective of the present paper is to obtain existence result for the widest possible class of maximal monotone graphs. Hence various non-standard possibilities are considered including anisotropic growth conditions, $x-$dependent growth conditions and also relations given by maximal monotone graph. The last ones provide the possibility of generalization of discontinuous relations, namely considering $A$ as a discontinuous function of $\nabla u$, where the jumps of $A$  are filled by intervals creating vertical parts of the graph $\mA$.
Most of these generalities shall arise in a function that will prescribe the growth/coercivity conditions. Contrarty to the usual case of Leray-Lions type operators, where the polynomial growth is assumed,
e.g. $|A(\xi)|\le c(1+|\xi|)^{p-1},\,A(\xi)\cdot\xi\ge C|\xi|^p$ for some nonnegative constants $c, C$ and $p>1$ we shall 
work with $N-$functions.  By an $N-$function we mean that  $M:\bar \Omega\times\R^d\to\R_+$, $M(x,a)$
  is measurable w.r.t. $x$ for all $a\in\R^d$ and  continuous w.r.t. $a$ for a.a. $x\in\bar \Omega$,  convex in $a$, has superlinear growth, $M(x,a)=0$ iff $a=0$ and  
$$\lim_{|a|\to\infty}\inf_{x\in \Omega}\frac{M(x,a)}{|a|}=\infty.$$
 Moreover the conjugate function $M^*$ is defined as 
$$M^*(x,b)=\sup_{a\in{\mathbb R}^d}(b\cdot a-M(x,a)).$$
%
The graph is expected to satisfy for almost all $(t,x)\in Q$ the following set of assumptions:
\begin{enumerate}
\item[{ (A1)}] $\mA$  comes through the origin.
\item[{ (A2)}] $\mA$  is a monotone graph, namely
$$
(A_1-A_2)\cdot (\xi_1-\xi_2) \ge 0 \quad \textrm{ for all } (\xi_1, A_1),(\xi_2,A_2)\in \mA(t,x)\,.
$$
\item[{ (A3)}] $\mA$  is a maximal monotone graph. Let $(\xi_2, A_2)\in \R^d \times \R^d$.
\begin{equation*}\begin{split}
&\textrm{If } ( {A_1} - A_2)\cdot( {\xi_1} - \xi_2) \geq 0 \quad \textrm{ for all }
({\xi_1}, A_1) \in \mA(t,x)\\&
 \textrm{ then } (\xi_2, A_2) \in \mA(t,x).
\end{split}\end{equation*}
\item[{ (A4)}]  $\mA$  is an  {\it $M-$ graph.} There are non-negative $k\in
L^1(Q)$,   $c_*>0$ and $N$-function $M$ such that
\begin{equation*} 
A \cdot \xi \geq -k(t,x) +c_*(M(x,\xi) + M^*(x,A)) 
\end{equation*}
 for all $ (\xi,A)\in\mA(t,x).$ 
\item[{ (A5)}]  The existence of a measurable selection.  Either there is $\tilde A:Q\times \mathbb{R}^{d} \to
\mathbb{R}^{d}$ such that $(\xi,
\tilde A(t,x,\xi)) \in \mA(t,x)$ for all $\xi \in \Rd$ and
$\tilde A$ is measurable, 
 or there is $\tilde \xi:Q\times \mathbb{R}^{d} \to \mathbb{R}^{d}$ such
that $(\tilde\xi(t,x,A), A) \in \mA(t,x)$ for all $A \in \R^{d}$ and $\tilde\xi$ is measurable.
\end{enumerate}

Let us shortly refer again to the classical Leray-Lions operators. Within the setting  presented 
above  we would use the $N-$function
$M(a)=|a|^p$ with the conjugate function $M^*(a)=|a|^{p'}$, with $1/p+1/p'=1$. 

As we allow also for $x$ dependence, then the presented framework caputres also  the case of growth conditions in variable exponent case, namely
$M(a)=|a|^{p(x)}$. The further generalization is the anisotropic character and  functions different than only polynomials, hence the following example is acceptable 
$M(x,a)=a_1^{p_1(x)}\ln(|a|+1)+e^{a_2^{p_2(x)}}-1$ for $a=(a_1,a_2)\in\R^2$.
All the functions having a growth essentially different than polynomial (e.g. close to linear or exponential)
yield additional analytical difficulties and significantly constrain good properties of corresponding function spaces (like separability or reflexivity, or density of compactly supported smooth functions).  
We shall now discuss this issue in more detail. For this reason let us recall some definitions.
By the generalized Musielak-Orlicz class ${\mathcal L}_M(Q)$ we mean 
 the set of all measurable functions
 $\xi:Q\to\R^{d}$ for which the modular  $$\rho_{M,Q}(\xi)=\int_Q M(x,\xi(t,x)) \,dx\,dt$$ is finite. 
 By $L_M(Q)$ we mean the generalized Orlicz space which is the set of
all measurable functions
 $\xi:Q\to\R^{d}$ for  which $\rho_{M,Q}(\alpha\xi)\to0$ as $\alpha\to
 0.$
This is a Banach space with respect to the
 norm
$$\|\xi\|_M=\sup\left\{\int_Q \eta\cdot\xi dxdt : \eta\in L_{M^\ast}(Q),\int_Q M^\ast(x,\eta)
\,dx\,dt\le1\right\}.$$ 
 All over in the above definitions we used the notion of generalized Musielak-Orlicz spaces. Contrary to the classical Orlicz
 spaces we capture the case of $x-$dependent $N-$functions as well as functions dependent on the whole vector, not
 only on its absolute value (i.e. anisotropic).
Moreover, By $E_M(Q)$ we mean the closure of  bounded functions in
$L_M(Q)$. The space $L_{M^\ast}(Q)$ is the dual space of
$E_M(Q)$.
 A sequence $z^j$ is said to converge modularly to $z$ in $L_M(Q)$ if
there exists $\lambda>0$ such that
$$\rho_{M,Q}\left(\frac{z^j-z}{\lambda}\right)\to0$$
which is  denoted   by $z^j\modular{M} z$.
The basic estimates which we will frequently use in a sequel are 
the  H\"older inequality
\begin{equation}\label{hoelder}
\int_Q \xi \eta \,dx\,dt\le c\|\xi\|_M\|\eta\|_{M^*}
\end{equation}
and the  Fenchel-Young inequality
\begin{equation}\label{F-Y}
|\xi\cdot\eta|\le M(x,\xi)+M^*(x,\eta). 
\end{equation}
The essence of our considerations is  the lack of the assumption of $\Delta_2-$con\-di\-tion. We say 
that $M$ satisfies $\Delta_2-$condition if there exists a constant $c>0$ and a summable function $h$
such that
\begin{equation}\label{delta2}
M(x,2a)\le c M(x,a)+h(x)
\end{equation}
for all $a\in\R^d$. If $M$ satisfies \eqref{delta2} then $L_M(Q)$ is separable and compactly supported smooth functions are dense in strong topology. If additionally $M^*$ satisfies \eqref{delta2} then $L_M(Q)$ is reflexive. Notice that none of these assumptions is made in the present paper. For this reason the notion of modular topology and the issue of density of compactly supported smooth functions with respect to the modular topology are of crucial meaning. 
The basic properties which are mentioned above of anisotropic Musielak-Orlicz spaces were discussed and proved 
in~\cite{GwMiSw2013}.

As the density arguments become an essential tool, then the dependence of an $N-$function on $x$ 
becomes the significant constraint. The problem arises when we try to estimate uniformly the convolution operator. To handle this obstacle, we need some regularity with respect to the space variable. More precisely,  
we will assume that the function  $M$ satisfies  the following properties:
\begin{itemize}
\item[(M)] there exists a constant $H>0$ such that for all $x,y\in \Omega, |x-y|\le\frac{1}{2}$
and for all $\xi\in\R^d$ such that $|\xi|\ge 1$
\begin{equation}\label{log}
\frac{M(x,\xi)}{M(y,\xi)}\le |\xi|^\frac{H}{\log\frac{1}{|x-y|}}.
\end{equation}
Moreover, 
for every  bounded measurable set $G$  and every $z\in\R^d$
\begin{equation}\label{int}
\int_G M(x,z)< \infty.
\end{equation}
\end{itemize}

Below we formulate the definition and then state the existence theorem which is the main result of the present paper.  We shall use the following notation: by ${\cal C}_c^\infty(\Omega)$ we denote the space
of infinitely differentiable compactly supported functions in $\Omega$. Let $p\le 1\le \infty$ and 
$k\in\N$, then we denote by $(L^p(\Omega),\|\cdot\|_{L^p(\Omega)})$ the Lebesgue spaces and by 
$(W^{k,p}(\Omega), \|\cdot\|_{W^{k,p}(\Omega)})$ the Sobolev spaces.  By $W^{k,p}_0(\Omega)$  we
mean the closure of ${\cal C}_c^\infty(\Omega)$  with respect to the norm $ \|\cdot\|_{W^{k,p}(\Omega)}$ and $W^{-k,p'}(\Omega)$ with $1/p+1/p'=1$ denotes its dual space.  
Moreover we use the notation
${\cal C}_{weak}(0,T;L^2(\Omega))$ for the space of all $\varphi\in L^\infty(0,T;L^2(\Omega))$ which satisfy $(\varphi(t),v)\in{\cal C}([0,T])$ for all $v\in{\cal C}(\bar\Omega)$.

\begin{definition}\label{d:1}
Assume that
$
u_0\in L^{2}(\Omega)
$ and $f\in L^\infty(Q).$ 
We say that $(u,A)$ is weak solution to \eqref{P1a}-\eqref{P3a}  if
\begin{equation}\label{LM}
u\in L^\infty(0,T;L^2(\Omega)),  \nabla u \in L_M(Q),\ 
A\in  L_{M^\ast}(Q)
\end{equation}
and \begin{equation}\label{Cweak}
u \in  {\cal C}_{\textrm{weak}}(0,T; L^{2}(\Omega)).
\end{equation}
Moreover, the following identity 
\begin{equation}
\begin{split}
\int_Q \left(-u \varphi_t +A
\cdot \nabla \varphi \right) \,dx \,dt+\int_\Omega u_0(x)\varphi(0,x) \,dx
=\int_Qf\varphi \,dx\,dt,
\end{split} 
\end{equation}
is satisfied  for all  $\varphi\in{\cal C}_c^\infty((-\infty,T)\times\Omega)$
and
\begin{equation}
\left ( \nabla u ((t,x)), A(t,x) \right ) \in \mA(t,x)  \textrm{ for a.a. }
(t,x) \in Q.
\end{equation}
\end{definition}



\begin{theorem}\label{main2}
Let $M$ be an ${ N}$--function satisfying (M) and let  $A$ satisfy
conditions (A1)--(A5). Given $f\in L^\infty(Q) $
and
 $u_0\in
L^2(\Omega)$ 
there exists  a weak solution to \eqref{P1a}-\eqref{P3a}.
\end{theorem}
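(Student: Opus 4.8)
The plan is to construct solutions by a Galerkin approximation combined with a regularization of the monotone graph, then to pass to the limit using monotonicity and the modular convergence tools recalled above. First I would replace the graph $\mA$ by a single-valued approximation: using the measurable selection from (A5), say $\tilde A(t,x,\xi)$, I would mollify in $\xi$ and add a small coercive perturbation, e.g.\ consider $A_\varepsilon(t,x,\xi)$ which is Lipschitz in $\xi$, monotone, and satisfies a uniform $M$--coercivity bound inherited (up to constants and the function $k$) from (A4). For fixed $\varepsilon$ the operator $\xi\mapsto A_\varepsilon(t,x,\nabla u)$ is a bounded monotone Leray--Lions-type operator on a Sobolev space $W^{1,r}_0(\Omega)$ for a suitable fixed exponent $r>1$ chosen below the growth of $M$ (so that $L_M(Q)\hookrightarrow L^r(Q)$ locally), and classical monotone operator theory (Galerkin in space + compactness in time via an Aubin--Lions argument for $u_t$) yields a weak solution $u_\varepsilon\in L^\infty(0,T;L^2(\Omega))\cap L^r(0,T;W^{1,r}_0(\Omega))$ with $A_\varepsilon:=A_\varepsilon(\cdot,\cdot,\nabla u_\varepsilon)$.

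Next I would derive the uniform a priori estimates. Testing the equation with $u_\varepsilon$ gives, by (A4), the energy identity
\begin{equation}
\tfrac12\|u_\varepsilon(t)\|_{L^2(\Omega)}^2 + c_*\int_0^t\!\!\int_\Omega\bigl(M(x,\nabla u_\varepsilon)+M^*(x,A_\varepsilon)\bigr)\,dx\,ds \le \tfrac12\|u_0\|_{L^2(\Omega)}^2 + \|k\|_{L^1(Q)} + \int_0^t\!\!\int_\Omega f u_\varepsilon\,dx\,ds,
\end{equation}
and the last term is absorbed by Gronwall, so $u_\varepsilon$ is bounded in $L^\infty(0,T;L^2(\Omega))$, $\nabla u_\varepsilon$ is bounded in the modular sense in $L_M(Q)$ (i.e.\ $\rho_{M,Q}(\nabla u_\varepsilon)$ is uniformly bounded), and $A_\varepsilon$ is bounded in $L_{M^*}(Q)$; in particular $u_\varepsilon$ is bounded in $L^r(0,T;W^{1,r}_0(\Omega))$ and $(u_\varepsilon)_t$ in $L^{r'}(0,T;W^{-1,r'}(\Omega))$. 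From these bounds I extract a subsequence with $u_\varepsilon\to u$ strongly in $L^2(Q)$ (Aubin--Lions) and a.e., $\nabla u_\varepsilon\weak\nabla u$ in $L^r(Q)$, and $A_\varepsilon\weakstar A$ in $L_{M^*}(Q)$ in the sense of the duality with $E_M(Q)$; moreover, since a modular-bounded sequence is, by the de la Vall\'ee-Poussin / Dunford--Pettis type argument, relatively weakly compact, one may also assume biting-type or weak-$L^1$ convergence of $M(x,\nabla u_\varepsilon)$ and $M^*(x,A_\varepsilon)$.

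The heart of the proof is the identification $(\nabla u,A)\in\mA$ a.e., which is the step I expect to be the main obstacle, precisely because $M$ need not satisfy $\Delta_2$, so smooth compactly supported functions are not strongly dense and the classical Minty trick in $L_M$ cannot be applied directly. Here I would use the density of ${\cal C}_c^\infty$ with respect to the \emph{modular} topology (guaranteed under hypothesis (M), by the results of \cite{GwMiSw2013}) to justify the use of $u$ itself, approximated modularly, as a test function. The scheme is: pass to the limit in the weak formulation to get the equation $u_t-\div A = f$ with $u\in{\cal C}_{\mathrm{weak}}(0,T;L^2)$; then establish the energy (in)equality for the limit by a careful time-regularization (Steklov averaging of the equation tested against a modular approximation of $u$, plus the chain rule $\frac{d}{dt}\tfrac12\|u\|_{L^2}^2=\langle u_t,u\rangle$, which is valid since $u_t\in L^{r'}(W^{-1,r'})$ and $u\in L^r(W^{1,r}_0)$); this yields $\limsup_\varepsilon\int_Q A_\varepsilon\cdot\nabla u_\varepsilon\,dx\,dt\le\int_Q A\cdot\nabla u\,dx\,dt$. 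Combining this inequality with the monotonicity of $A_\varepsilon$ and maximal monotonicity of $\mA$ via a Minty--Browder argument adapted to the modular setting — test the monotonicity inequality $(A_\varepsilon-B)\cdot(\nabla u_\varepsilon-\eta)\ge0$ for $(\eta,B)\in\mA$, $\eta$ a simple function, and pass to the limit using the weak-$L^1$ convergence of the products and modular density — gives $(A-B)\cdot(\nabla u-\eta)\ge0$ for all such $(\eta,B)$, whence by (A3) $(\nabla u,A)\in\mA(t,x)$ a.e. Finally, lower semicontinuity of the modular under weak-$L^1$ convergence upgrades the a priori bounds to $\nabla u\in L_M(Q)$, $A\in L_{M^*}(Q)$, completing the verification of Definition~\ref{d:1}. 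A technical point worth flagging is that one must choose the regularization $A_\varepsilon$ so that the approximate coercivity constant and the function $k$ do not degenerate, and that the modular approximation of $u$ used as a test function is simultaneously an admissible test function for the (regularized) equation — this is where condition (M) and the integrability condition \eqref{int} enter decisively.
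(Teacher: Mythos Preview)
Your overall architecture---regularize the graph via a measurable selection, Galerkin, uniform energy estimates, then a Minty-type identification using modular density under (M)---matches the paper's. But there is a genuine gap in the way you set up and analyze the approximate problem. You assume that the mollified $A_\varepsilon$ is a Leray--Lions operator on some fixed $W^{1,r}_0(\Omega)$ with $r>1$ ``chosen below the growth of $M$'', that $L_M(Q)\hookrightarrow L^r(Q)$, that $(u_\varepsilon)_t\in L^{r'}(0,T;W^{-1,r'})$, and that Aubin--Lions and the standard chain rule apply in this $L^r$--$L^{r'}$ duality. None of this is available in the generality considered here: $M$ is allowed to grow arbitrarily slowly (e.g.\ $M(\xi)=|\xi|\ln(1+|\xi|)$), so there is in general no $r>1$ with $L_M\hookrightarrow L^r$; and $M^*$ may grow exponentially, so $A_\varepsilon$ is neither globally Lipschitz in $\xi$ nor bounded as an operator into any $L^{r'}$. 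Consequently the ``classical monotone operator theory'' you invoke to produce $u_\varepsilon$, the Aubin--Lions compactness, and the chain rule $\tfrac{d}{dt}\tfrac12\|u\|^2_{L^2}=\langle u_t,u\rangle$ all fail to be justified.

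The paper avoids this by reversing your order of limits: it mollifies the selection and builds the $n$-dimensional Galerkin system for each $\varepsilon$, then passes $\varepsilon\to0$ \emph{first} at fixed $n$ (where Arzel\`a--Ascoli yields strong convergence in ${\cal C}([0,T];{\cal C}^1(\overline\Omega))$, so the graph identification at this level is elementary via Lemma~\ref{LS*}), and only afterwards sends $n\to\infty$. For the second limit the time derivative is estimated not in $W^{-1,r'}$ but in $L^1(0,T;W^{-r,2}(\Omega))$ with $r>d/2+1$ (so that $W^{r-1,2}\subset L^\infty$ and the projected test functions have bounded gradients), using only the uniform $L^1(Q)$ bound on $A^n$ coming from uniform integrability; this gives equicontinuity of $u^n$ in $W^{-r,2}$ and hence $u\in{\cal C}_{\mathrm{weak}}(0,T;L^2)$. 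The energy equality for the limit is then obtained not via a duality chain rule but through the modular approximation $v^j$ of Theorem~\ref{main1} combined with a time mollifier $K^\epsilon$, passing first $j\to\infty$ and then $\epsilon\to0$ using the modular convergence of $K^\epsilon*\nabla u$ and $K^\epsilon*A$ (Propositions~\ref{ae}, \ref{sup} and Lemma~\ref{modular-conv}), followed by Proposition~\ref{product} for the product. You correctly identify this modular-density step as the crux; what needs replacing is the $L^r$ scaffolding around it.
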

%
%
The current paper provides complementary studies to the results presented in~\cite{Sw2013}. Here we also consider the problem of existence of weak solutions to the parabolic problem including multivalued terms. However, the essential differerence consists in the properties of an 
$N-$function describing the growth conditions of graph $\mA$. In~\cite{Sw2013} we concentrated on 
the case with time-dependent $N-$function. This required more delicate approximation theorem 
and excluded the possibility of anisotropic functions. 
The studies presented here do not extend the results of the previous paper, but are parallel to them. We decided to omit here the dependence on time of an $N-$function, but added the possibility of  anisotropic behaviour.

The anisotropic parabolic problems were conisdered also  in~\cite{GwSw2010}. 
This was however much simpler situation, namely the studies concerned an equation and 
the $N-$function was assumed to be homogeneous in space. The anisotropic and space-inhomogeneous problems, however in 
slightly different setting, namely in the case of systems describing flow of non-Newtonian fluids were considered in~\cite{GwSw2008, GwSw2008a, GwSwWr2010, Wr2010}. The authors assumed $\Delta_2-$condition on the conjugate $N-$function. The simplified problem, namely the generalized Stokes equation, in the case omitting the $\Delta_2-$condition on the conjugate $N-$function was considered in~\cite{GwSwWr2012}.

The approach of maximal monotone graphs also to problems arising in fluid mechanics was undertaken in  \cite{ BuGwMaSw2009, 
GwMaSw2007} for the $L^p$ setting and in  \cite{BuGwMaRaSw2012,BuGwMaSw2012} for the setting in Orlicz spaces. The latter ones however were restricted to classical Orlicz spaces with the assumption that $\Delta_2-$condition was satisfied.


Most of the earlier results on existence of solutions to parabolic problems in non-standard setting concern
the case of classical Orlicz spaces, see e.g.~\cite{Donaldson}
and  later studies  of Benkirane, Elmahi and Meskine, cf.~\cite{BeEl1999, ElmahiMeskine, ElMe2005}.
 All of them concern the case of  an $N-$function dependent only on $|\xi|$ without the dependence on $x$.  

The paper is organized as follows: Section 2 contains the proof of Theorem~\ref{main2}, Section 3 
is devoted to the problems of density of compactly supported smooth functions with respect to 
the modular convergence. In the appendix we include some facts, which are used in the sequel and we refer 
to their proofs.

\section{Existence of solutions }

The current section contains a proof of Theorem~\ref{main2}. The construction of an approximate problem follows in two steps.
By (A5) there exists a measurable selection $\tilde A:Q\times \R^{d}\to\R^{d}$ of the graph $\mA$.  Obviously, each such a selection $\tilde A$ defined
on $\R^{d}$, is monotone and due to (A4) satisfies
\begin{equation}\label{316}
 \tilde A(t,x,\xi)\cdot\xi\ge -k(x,t)+ c_*(M(x,\xi)+M^*(x,\tilde A(t,x,\xi))\mbox{ for all }\xi \in \R^{d}.
\end{equation} We mollify $\tilde A$  with a smoothing kernel
and then construct the finite-dimensional problem by means of Galerkin method.
 Indeed, let 
 \begin{equation}\label{S}
 S\in
{\cal C}^\infty_c(\R^{d}), \  \int_{\R^{d}} S(y)\;dy=1, \, S(y)=S(-y), \   S_\varepsilon(y):=1/\varepsilon^{d}S(y/\varepsilon)
\end{equation}
  with ${\rm supp }\, S$ in a unit ball $B(0,1)\subset\R^d$ and define
 \begin{equation}\label{Te}
 A^\varepsilon (t,x,\xi):=(\tilde A* S_\varepsilon )(t,x,\xi)=\int_{\R^d}\tilde A(t,x,\zeta)
 S_\varepsilon (\xi-\zeta)\, d\zeta.
\end{equation}
 Using the convexity of $M$ and $M^*$ and the Jensen inequality allows to conclude that the approximation $ A^\varepsilon$ satisfies a condition analogous to  \eqref{316}, namely
\begin{equation}\label{319}
A^\vep \cdot \nabla u \geq -k(t,x) +c_*(M(x,\nabla u) + M^*(x,A^\vep)). 
\end{equation}
For the proof of analogous estimate for the approximation in case of polynomial conditions see~\cite{GwMaSw2007} and also~\cite{GwZa2007}. 

The assumption (A5) included either the possibility of  existence of a selection  $\tilde A$,
as was presented above, or  existence of a selection $\tilde\xi:Q\times\R^d\to\R^d$, such that 
$(\tilde \xi(t,x,A),A)$ is for all $A\in\R^d$ in the graph $\mA$. In the second case we would define
\begin{equation}
\xi^\vep(A):=(\tilde\xi*S_\vep)(t,x,A)+\vep A.
\end{equation}
Such a definition provides that the function $A\mapsto\xi^\vep(A)$ is invertible. Note that  since $\vep A\cdot A\ge 0$
one can show that for the pair $(\xi^\vep(A), A)$ an analogue of \eqref{319} holds, and consequently also for 
$(\xi, (\xi^\vep)^{-1}(\xi))$.
 Thus we may  define $A^\vep$ as follows
\begin{equation}
A^\vep:=(\tilde\xi*S_\vep+\vep\,  I\!d)^{-1}. 
\end{equation}
One  proceeds further analogously to the previous situation. In the sequel we present the proof for the case when there exists a selection
$\tilde\xi$ and $A^\vep$ is given by \eqref{Te}.

Consider now the basis consisting of eigenvectors of the Laplace operator  with Dirichlet boundary condtion and let  $u^{ \varepsilon,n}$ be the solution to the   finite dimensional problem with function~$A^\varepsilon$, namely 
$u^{ \varepsilon,n}(t,x):=\sum_{i=1}^nc_i^{ \varepsilon,n}(t)\omega_i(x)$ which solves the following system
\begin{equation}\label{aproksymacja}
\begin{split}
(u^{ \varepsilon,n}_t, \omega_i)+(A^\varepsilon(t,x,\nabla u^{ \varepsilon,n}),\nabla \omega_i)
=\langle f,\omega_i\rangle, \qquad i=1,\ldots,n, \\
u^{ \varepsilon,n}(0)=P^nu_0
\end{split}\end{equation}
where $P^n$ is the orthogonal projection of $L^2(\Omega)$ on the ${\rm span}\,\{\omega_1,
\ldots,\omega_n\}.$
 Let $Q^s:=(0,s)\times \Omega$ with 
$0<s<T$.
Using \eqref{319} allows to conclude
\begin{equation}\begin{split}
\sup\limits_{s\in(0,T)}\|u^{ \varepsilon,n}(s)\|_{L^2(\Omega)}^2&+c_*\int_QM(x,\nabla u^{ \varepsilon,n})
+M^*(x,A^\varepsilon(t,x,\nabla u^{ \varepsilon,n})) \,dx\,dt
\\&
\le c(\|u_0\|_{L^2(\Omega)}^2+\|f\|_{{L^\infty(Q)}}+\int_Q k \,dx\,dt
).\end{split}\label{energy-eps}
\end{equation}
 In a consequence of \eqref{energy-eps}  there exists a subsequence (labelled the same) such that
   \begin{equation}\label{ep-to-zero}\begin{split}
  \nabla u^{ \varepsilon,n}\weakstar  \nabla u^{ n}\quad &\quad\mbox{weakly-star  in}\quad L_{M}(Q),\\
A^\varepsilon (\cdot,\cdot,\nabla u^{ \varepsilon,n})\weakstar A^{ n} \quad &\quad\mbox{ weakly-star  in}\quad L_{M^*}(Q).
\end{split}\end{equation}
Moreover, from relation \eqref{aproksymacja} we concude the boundedness of the sequence 
$u^{ \varepsilon,n}_t$ in $L_{M^*}(Q)$ and hence up to the subsequence we have 
\begin{equation}
u^{ \varepsilon,n}_t\weakstar u^{ n}_t\quad\quad\mbox{ weakly-star  in}\quad L_{M^*}(Q).
\end{equation}

Further we observe that  \eqref{aproksymacja}  implies  that 
$\frac{d}{dt}c_i^{\vep,n}(t)$ is bounded in the space $L_{M^*}([0,T])$, what implies the  uniform integrability in $L^1([0,T])$. Consequently  there exists a monotone, continuous $L:\R_+\to\R_+$, with $L(0)=0$ such that for all $s_1,s_2\in (0,T)$
$$\left|\int_{s_1}^{s_2}\frac{d}{dt}c_i^{\vep,n}(t)\,dt\right|\le L(|s_1-s_2|)$$
and thus the sequence $c_i^{\vep,n}$ is uniformly equicontinuous 
$$|c_i^{\vep,n}(s_1)-c_i^{\vep,n}(s_2)|\le L(|s_1-s_2|).$$
From   \eqref{energy-eps} we conclude that  $c_i^{\vep,n}(t) $ is bounded in $L^\infty([0,T])$ 
and hence by the Arzel\`a-Ascoli theorem there  exists a uniformly convergent subsequence 
$\{c_i^{\vep_k,n}\}$ in ${\cal C}([0,T])$  and taking into account the regularity of the basis 
$\{\omega_i\}_{i=1}^n$ we conclude

\begin{equation}\label{mocna}
 u^{ \varepsilon,n}\to  u^{ n}\quad \quad\mbox{strongly in}\quad {\cal C}([0,T];{\cal C}^1(\overline\Omega)).
\end{equation}
The limit passage with $\varepsilon\to 0$ is done on the level of finite-dimensional problem. It follows the similar lines as in~\cite{BuGwMaSw2012}, however we shall recall the main steps. 
Using \eqref{ep-to-zero}-\eqref{mocna} we obtain the  following limit problem 
\begin{equation}\label{Galerkin}
\begin{split}
(u^{ n}_t, \omega_i)+(A^{ n},\nabla \omega_i)
=\langle f,\omega_i\rangle, \qquad i=1,\ldots,n, \\
u^{ n}(0)=P^nu_0.
\end{split}\end{equation}
To complete the limit passage we need to provide that  
\begin{equation}\label{in-graph}
( \nabla u^{ n}, A^{ n})\in{\cal A}.
\end{equation}
Following \cite{BuGwMaSw2012} and also \cite{Sw2013}, with simple algebraic tricks and estimates which are not included in the present paper,
we conclude that for all $B\in\Rd$ and for a.a. $(t,x)\in Q$
\begin{equation}
(A^{ n}-\tilde A(t,x,B)) \cdot (\nabla u^{ n}-B)\ge0 \,.
\end{equation}
Hence, using  the equivalence
  of $(i)$ and $(ii)$ in  Lemma~\ref{LS*}, we arrive to \eqref{in-graph}.
Before passing to the limit with $n\to\infty$ we notice that 
in the same manner as before we obtain the estimates, which are uniform with respect to $n$, namely 
\begin{equation}\begin{split}
\sup\limits_{s\in(0,T)}\|u^{ n}(s)\|_{L^2(\Omega)}^2+\int_QM(x,\nabla u^{ n})+M^*(x,A^{ n}) \,dx\,dt\\
\le c(\|u_0\|_{L^2(\Omega)}^2+\|f\|_{L^\infty(Q)}+\|k\|_{L^1(Q)}). 
\end{split}\label{esty2}\end{equation}
Consequently  there exists a subsequence, labelled the same, such that 
  \begin{equation}\begin{split}
 \nabla u^{ n}\weakstar \nabla u\quad &\quad\mbox{weakly-star in}\quad L_M(Q),\\
  u^{ n}\weak  u\quad &\quad\mbox{weakly in}\quad L^1(0,T;W^{1,1}(\Omega)),\\
A^{ n}\weakstar A\quad &\quad\mbox{ weakly-star  in}\quad L_{M^*}(Q),\\
u^{ n}\weakstar u\quad&\quad\,\mbox{weakly-star in}\,\,L^\infty(0,T;L^2(\Omega)).\\
u^{ n}_t\weakstar u_t\quad&\quad\mbox{ weakly-star  in}\quad W^{-1,\infty}(0,T;L^2(\Omega)).\end{split}\label{228}\end{equation}
Using \eqref{228} we let $n\to\infty$ and conclude from \eqref{Galerkin} that the following  identity
\begin{equation}\label{limit}
 u_t  -\div {A} =  f
\end{equation}
holds in a distributional sense. 
Again, to complete the limiting procedure, we need to show that $(\nabla u, A)\in\mA(t,x)$. 
This case however requires more attention, contrary to the previous limit passage on the level of
fixed finite dimension $n$. The essence of this step is using the maximal monotonicity of the graph
$\mA$, in particular the property formulated in Lemma~\ref{Minty2}. As the  assumptions 
 \eqref{11}-\eqref{33} are obviously satisfied, then our attention shall be directed to \eqref{44}. 
For this aim we need to establish a strong energy inequality. Since testing \eqref{limit} with a solution is not possible, we first approximate it with respect to the space variable. 
By  Theorem~\ref{main1} there exists a sequence $v^j\in L^\infty(0,T;{\cal C}_c^\infty(\Omega
))$ such that 
\begin{equation}\label{zb_w_M}
\nabla  v^j\modular{M} \nabla u\ \mbox{ modularly in}\  L_M(Q) 
\ \mbox{and}\  v^j\to u \ \mbox{strongly in }\ 
L^2(Q).
\end{equation}
And hence we shall test with a  function of the form 
\begin{equation}\label{testfunkcja}
u^{j,\epsilon}= K^\epsilon\ast (K^\epsilon\ast v^j \bbbone_{(s_0,s)})
\end{equation}
   with $ K\in
{\cal C}_c^\infty(\R),$  $ K(\tau)= K(-\tau),\ 
\int_\R K(\tau)d\tau=1$ and  defining
$ K^\epsilon(t)=\frac{1}{\epsilon} K(t/\epsilon)$, $\epsilon<\min\{ s_0,T-s\}. $ Thus
\begin{equation}\label{335}
\int_{s_0}^{s}\int_\Omega (u*K^\epsilon)\cdot \partial_t(v^j*K^\epsilon)\,dx\,dt
=\int_Q A\cdot\nabla u^{j,\epsilon}\,dx\,dt-\int_Q fu^{j,\epsilon}\,dx\,dt.
\end{equation}
Because of \eqref{zb_w_M} we easily pass to the limit with $j\to\infty$.
Indeed, the left-hand side of \eqref{335} can be easily handled since this term can be reformulated
to $\int_Q((\partial_t K^\epsilon)*K^\epsilon*u) v^j\,dx\,dt$ and hence the limit passage is obvious.
Note that for all $0<s_0<s<T$ it follows
	\begin{equation}
	\begin{split}
 	 \int_{s_0}^{s}\int_\Omega
	(K^\epsilon\ast u)\cdot\partial_t(K^\epsilon\ast u) \,dx\,dt
	 &
= 
	\int_{s_0}^{s} \frac{1}{2}\frac{d}{d t}\| K^\epsilon\ast
	u\|^2_{L^2(\Omega)}\,d t\\
	&=\frac{1}{2} \| K^\epsilon\ast u(s)\|^2_{L^2(\Omega)}-
	\frac{1}{2}\| K^\epsilon\ast u(s_0)\|^2_{L^2(\Omega)}.
	\end{split}
	\end{equation}
Passing  to the limit with $\epsilon\to0$ yields  for almost
all $s_0, s$, namely for all Lebesgue points of the function $ u(t)$ that the following identity
	\begin{equation}\label{vt}
	\lim\limits_{\epsilon\to0}
	\int_{s_0}^{s}\int_\Omega( u\ast  K^\epsilon)\cdot \partial_t ( u\ast K^\epsilon)=
	\frac{1}{2}\| u(s)\|_{L^2(\Omega)}^2-\frac{1}{2}\| u(s_0)\|^2_{L^2(\Omega)}
	\end{equation}
holds.
Observe now the term
	$$\int_0^T\int_\Omega  A \cdot( K^\epsilon\ast(( K^\epsilon\ast \nabla u)\ 	\bbbone_{(s_0,s)}))dxdt
	=\int_{s_0}^{s}\int_\Omega
	( K^\epsilon\ast A )\cdot( K^\epsilon\ast \nabla u) dxdt.$$
Both of the sequences $\{ K^\epsilon\ast A \}$ and
$\{ K^\epsilon\ast \nabla u\}$ converge in measure in $Q$ by Proposition~\ref{ae}.
 Moreover
	$$\int_Q (M(x,\nabla u)+ M^\ast(x, A ))\,dx\,dt< \infty	.
	$$ 
 Hence by Proposition~\ref{sup} we conclude 
that the sequences
$\{M^*\left( x,K^\epsilon\ast A \right)\}$ and 
$\{M\left( x,K^\epsilon\ast \nabla u\right)\}$
are uniformly integrable and with help of 
Lemma~\ref{modular-conv} we have
	\begin{equation*}
	\begin{split}
	 K^\epsilon\ast \nabla u
	&\modular{M}\nabla u\quad\mbox{modularly in }L_{M}(Q),\\
	 K^\epsilon\ast A 
	&\modular{M^\ast} A \quad\quad\mbox{modularly in }L_{M^*}(Q).
	\end{split}
	\end{equation*}
Applying Proposition~\ref{product} allows to conclude
	\begin{equation}\label{chi}
	\lim\limits_{\epsilon\to0}\int_{s_0}^{s}\int_\Omega
	( K^\epsilon\ast A )\cdot( K^\epsilon\ast \nabla u) dxdt
	=\int_{s_0}^{s}\int_\Omega  A \cdot \nabla u dxdt.
	\end{equation}
Passing to the limit with $\epsilon\to0_+$ in the right-hand side  is obvious. 
Hence for the moment we are able to claim that the following holds
\begin{equation}
\frac{1}{2}\|u(s)\|_2^2-\frac{1}{2}\|u(s_0)\|_2^2+\int_{Q^s}A\cdot \nabla u \,dx\,dt=\int_{Q_s} fu \,dx\,dt
\end{equation}
 for almost all $0<s_0<s<T$. For further considerations we need to know that the same holds for $s_0=0$, hence let us
 pass to the limit with $s_0\to0$. Thus, we need to establish that \eqref{Cweak} holds.  
  We shall observe that using the approximate equation we estimate 
the sequence $\{\frac{d\u^n}{dt}\}$ uniformly (with respect to $n$) in the space $L^1(0,T;W^{-r,2}(\Omega))$, where
 $r>\frac{d}{2}+1$.
Consider 
 $\varphi\in L^\infty(0,T;W^{r,2}_0(\Omega))$, $\|\varphi\|_{L^\infty(0,T;W^{r,2}_0)}\leq 1$ and observe that 

	\begin{equation*}
	\left\langle\frac{du^n}{dt}, \varphi \right \rangle=
	\left\langle\frac{du^n}{dt}, P^n\varphi\right \rangle
	= -\int_\Omega
	 A^n\cdot \nabla(P^n\varphi)\,dx\\
	 +\int_\Omega f\cdot P^n\varphi\,dx.
	\end{equation*}
Since the orthogonal projection is continuous in  ${W^{r,2}_0(\Omega)}$ and 
$W^{r-1,2}(\Omega)\subset L^\infty(\Omega)$ we estimate as follows 
	\begin{equation}\label{dt1}
	\begin{split}
	&\Big{|}\int_0^T\int_\Omega  A^n\cdot \nabla(P^n\varphi)
	dxdt\Big{|}\le \int_0^T\|
	 A^n\|_{L^1(\Omega)}\|\nabla(P^n\varphi)\|_{L^\infty(\Omega)}dt\\
	&\le c\int_0^T\| A^n\|_{L^1(\Omega)}\|P^n\varphi\|_{W^{r,2}_0}dt
	\le c
	\| A^n\|_{L^1(Q)}\|\varphi\|_{L^\infty(0,T;W^{r,2}_0)}.
\end{split}	\end{equation}
From \eqref{esty2} and Lemma~\ref{uni-int} we conclude there exists   
a monotone, continuous function  $L:\R_+\to\R_+$, with $L(0)=0$, 
 independent of $n$, such that 
	$$\int_{s_1}^{s_2}\| A^n\|_{L^1(\Omega)}
	\le L(|s_1-s_2|)$$
for all $s_1,s_2\in[0,T]$. Conseqently,  \Ref{dt1} gives us
	$$\left|\int_{s_1}^{s_2}\left\langle\frac{d u^n}{dt},\varphi\right\rangle dt\right|	\le
	L(|s_1-s_2|)$$
for all $\varphi$ with 
${\rm supp}\ \varphi\subset(s_1,s_2)\subset[0,T]$ and 
$\|\varphi\|_{L^\infty(0,T;W^{r,2}_0)}\le 1$. 
Since 
	\begin{equation}\begin{split}
	\|u^n(s_1)-u^n(s_2)\|_{W^{-r,2}}
	=
	\sup\limits_{\|\psi\|_{W^{r,2}_0}\le1}\left|\left\langle
	\int_{s_1}^{s_2}\frac{du^n(t)}{dt},\psi\right\rangle\right |
	\end{split}\end{equation}
then
	\begin{equation}
	\label{equi}
	\sup\limits_{n\in\N}\|u^n(s_1)-u^n(s_2)\|_{W^{-r,2}}\le L(|s_1-s_2|),
	\end{equation}
namely the family of functions $\u^n:[0,T]\to W^{-r,2}(\Omega)$ is equicontinuous. 
Moreover, it is  uniformly bounded in $L^\infty(0,T;L^2(\Omega))$ and hence the sequence $\{\u^n\}$ is relatively compact 
in  ${\cal C}([0,T];W^{-r,2}(\Omega))$ and the limit $\u\in {\cal C}([0,T];W^{-r,2}(\Omega))$.  Thus there exists  a sequence 
$\{s_0^i \}_i$, 
$s_0^i \to 0^+$ as $i\to\infty$ such that 
%
%
	\begin{equation}
	\u(s_0^i){{\stackrel{i\to \infty}{\longrightarrow\,}}}\u(0)\quad\mbox{in }W^{-r,2}(\Omega).
	\end{equation}
The limit above coincides with the weak limit of $\{\u(s^i_0)\}$ in $L^2(\Omega)$
what allows to claim that 
%
%
	\begin{equation}\label{liminfu0}
	\liminf\limits_{i\to\infty}\|\u(s_0)\|_{L^2(\Omega)}\geq\|\u_0\|_{L^2(\Omega)}.
	\end{equation} 
We obtain from \eqref{Galerkin} for any   Lebesgue point $s$  of $\u$ that 
	\begin{equation}\label{777}
	\begin{split}
	\limsup\limits_{n\to\infty}\int_{Q_s} &A^n\cdot \nabla\u^n \,dx\,dt\\
	& = 
	 \frac{1}{2} \|\u_0\|^2_{2} -
	\liminf\limits_{k\to\infty} \frac{1}{2}  \|\u^n(s)\|^2_{2}
	+\lim\limits_{n\to\infty}\int_{Q_s} f u^n\,dx\,dt\\
	& \leq
	 \frac{1}{2} \|\u_0\|^2_{2} -
	 \frac{1}{2}  \|\u(s)\|^2_{2}+\int_{Q_s} f u\,dx\,dt\\
	 &\leq
	 \liminf\limits_{i\to \infty}\left( \frac{1}{2} \|\u(s^i_0)\|^2_{2} -
	 \frac{1}{2}  \|\u(s)\|^2_{2}\right)+\int_{Q_s} f u\,dx\,dt\\
	  &{=} \lim\limits_{i\to\infty} 
	  \int_{s^i_0}^s\int_\Omega A\cdot\nabla\u \,dx\,dt \\
	  &= \int_{0}^s\int_\Omega A\cdot\nabla\u \,dx\,dt
	 \end{split}
	\end{equation}
	what provides that  \eqref{44} is satisfied and  Lemma~\ref{Minty2} allows to complete the proof. 
	
\section{Approximation}
In this section we shall concentrate on the issue of density of compactly supported smooth functions with respect to the modular topology. The fundamental studies in this direction are due to Gossez for the case of classical Orlicz spaces and elliptic equations \cite{Gossez1, Gossez2}.
The similar considerations for isotropic $x-$dependent $N-$functions are due to Benkirane et al. cf.~\cite{Be2011}, see also \cite{GwMiWr2012} for anisotropic case with an application to elliptic problems. 
 Note that the main idea is analogous to \cite{GwMiWr2012}. However, Gwiazda et al. approxi\-ma\-te the truncated functions which are appropriate test functions in the con\-si\-dered elliptic equation. This is not the case of parabolic problems. Hence the presented approximation theorem is under weaker assumptions and the dependence on time is taken into account. 
Since  this  result is  essential  for proving existence of weak solutions, then we include the details 
for completeness. 

\begin{theorem}\label{main1}
If $u\in  L^\infty(0,T;L^2(\Omega))\cap L^1(0,T;W^{1,1}_0(\Omega)), \nabla u\in L_M(Q)$ 
then there exists a sequence $v^j \in L^\infty(0,T;{\cal C}_c^\infty(\Omega))$
satisfying 
\begin{equation}\label{zb_w_M1}
\nabla v^j\modular{M} \nabla u\ \mbox{ modularly in}\  L_M(Q) 
\ \mbox{and}\  v^j\to u \ \mbox{strongly in }\ 
L^2(Q).
\end{equation}
\end{theorem}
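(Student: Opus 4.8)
The plan is the classical three–stage scheme for density in Orlicz–type spaces (Gossez \cite{Gossez1,Gossez2}, and in the Musielak–Orlicz setting \cite{GwMiWr2012}): (i) reduce to functions with support compactly contained in $\Omega$; (ii) regularize by convolution \emph{in the space variable only}; (iii) pass to a diagonal sequence. Because only the spatial variable is touched, the required membership $v^j\in L^\infty(0,T;{\cal C}_c^\infty(\Omega))$ follows from $\|v^j(t)\|_{{\cal C}^m(\bar\Omega)}\le C(j,m)\|u(t)\|_{L^1(\Omega)}\le C(j,m)|\Omega|^{1/2}\|u\|_{L^\infty(0,T;L^2(\Omega))}$, and $v^j\to u$ in $L^2(Q)$ is routine, since translation and convolution are continuous on $L^2(Q)$ and $\psi_k u\in L^2(Q)$. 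The whole difficulty is the modular convergence of the gradients, and this is where (M) enters.

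For (i) I would fix, using the regularity of $\partial\Omega$, a finite partition of unity $1=\sum_{k=0}^m\psi_k$ with $\psi_0\in{\cal C}_c^\infty(\Omega)$ and, for $k\ge1$, $\operatorname{supp}\psi_k$ contained in a small ball in which $\Omega$ is (after rotation) a ${\cal C}^2$ subgraph, so that for a small inward vector $h_k$ the translate $\tau_{h_k}(\psi_k u)$, $\tau_hf(x):=f(x-h)$, is supported in $\Omega$. Since $\nabla(\psi_k u)=\psi_k\nabla u+u\nabla\psi_k$ and $\sum_k u\nabla\psi_k=u\,\nabla(\sum_k\psi_k)=0$, the dangerous ``lower order'' contributions cancel after summation, while the surviving $\psi_k\nabla u$ are pointwise dominated by $\nabla u$ and hence lie in $L_M(Q)$; what must be shown is the modular continuity of translation, $(\tau_h-\mathrm{Id})(\psi_k\nabla u)\modular{M}0$ as $h\to0$. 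Step (ii) is the core: for $w$ with $\operatorname{supp}w\subset\subset\Omega$ and $\nabla w\in L_M(Q)$ put $w_\delta:=S_\delta*w$, so $w_\delta\in{\cal C}_c^\infty(\Omega)$ for $\delta<\operatorname{dist}(\operatorname{supp}w,\partial\Omega)$, $\nabla w_\delta=S_\delta*\nabla w\to\nabla w$ in measure (Proposition~\ref{ae}), and, by Vitali's theorem in the form of Proposition~\ref{sup} and Lemma~\ref{modular-conv}, it suffices to produce $\lambda>0$ with $\rho_{M,Q}(\lambda^{-1}\nabla w)<\infty$ for which $\{M(x,\lambda^{-1}\nabla w_\delta)\}_\delta$ is uniformly integrable on $Q$.

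By Jensen's inequality $M(x,\lambda^{-1}\nabla w_\delta(x))\le\int S_\delta(y)M(x,\lambda^{-1}\nabla w(x-y))\,dy$, so one is reduced to comparing $M(x,\cdot)$ with $M(x-y,\cdot)$ for $|y|\le\delta$. The mechanism furnished by~\eqref{log} is that for $|y|\le\delta\le\tfrac12$ and $1\le|\xi|\le\tfrac1\delta$
\[
M(x,\xi)\le M(x-y,\xi)\,|\xi|^{H/\log(1/\delta)}=M(x-y,\xi)\,e^{H\log|\xi|/\log(1/\delta)}\le e^{H}M(x-y,\xi),
\]
whereas for $|\xi|<1$ one bounds $M(x,\xi)\le\sup_{|\zeta|\le1}M(x,\zeta)\in L^1(\Omega)$ by~\eqref{int}. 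Splitting the $y$–integral over $\{|\nabla w(x-y)|<1\}$, $\{1\le|\nabla w(x-y)|\le1/\delta\}$ and $\{|\nabla w(x-y)|>1/\delta\}$, changing variables $z=x-y$, and using a truncation $\nabla w=\nabla w\,\bbbone_{\{|\nabla w|\le R\}}+\nabla w\,\bbbone_{\{|\nabla w|>R\}}$ (the bounded part dominated uniformly in $\delta$ by a fixed $L^1(Q)$ function via~\eqref{int}, the tail contributing $\int_{\{|\nabla w|>R\}}M(z,\lambda^{-1}\nabla w)\to0$ as $R\to\infty$) gives the required uniform integrability. The same estimate with $S_\delta$ replaced by a point mass at $h$ supplies the modular continuity of translation used in (i).

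For (iii) I would set $v^j:=\sum_{k=0}^m S_{\delta_j}*\tau_{h_k^j}(\psi_k u)$ with $h_0^j=0$, choosing $\delta_j\downarrow0$ and then $h_k^j\downarrow0$ small \emph{relatively to} $\delta_j$, so that each support lies in $\Omega$ and the leftover $\sum_k S_{\delta_j}*\tau_{h_k^j}(u\nabla\psi_k)=\sum_k S_{\delta_j}*(\tau_{h_k^j}-\mathrm{Id})(u\nabla\psi_k)$ — identically zero when the $h_k^j$ vanish — tends to $0$ uniformly, hence modularly; combining this with step (ii) applied to the principal terms $[S_{\delta_j}*\tau_{h_k^j}-\mathrm{Id}](\psi_k\nabla u)$ and summing over the finitely many $k$ yields $\nabla v^j\modular{M}\nabla u$. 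I expect the only genuine obstacle to be, inside (ii), the range $\{|\nabla w(x-y)|>1/\delta\}$, where~\eqref{log} only gives the weaker bound $M(x,\xi)\le M(x-y,\xi)|\xi|^{H/\log(1/\delta)}$ with an unbounded exponent: one must exploit that this region has small measure together with $\rho_{M,Q}(\lambda^{-1}\nabla w)<\infty$ and the superlinear growth of $M$ (for instance via a dyadic decomposition of that region) to absorb the extra factor, and it is precisely for this that the log-type regularity (M) is imposed.
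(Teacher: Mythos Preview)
Your three--stage scheme is close in spirit to the paper's, but the core of step~(ii) has a genuine gap exactly where you flag it. After Jensen and the change of variables $z=x-y$, on the set $\{|\lambda^{-1}\nabla w(z)|>1/\delta\}$ condition~\eqref{log} yields only $M(z+y,\lambda^{-1}\nabla w)\le M(z,\lambda^{-1}\nabla w)\,|\lambda^{-1}\nabla w|^{H/\log(1/\delta)}$. Your dyadic decomposition into shells $\{(1/\delta)^k<|\lambda^{-1}\nabla w|\le(1/\delta)^{k+1}\}$ produces a factor $e^{(k+1)H}$ on the $k$-th shell, and from $\nabla w\in L_M$ alone one has no summable-in-$k$ decay of $\int_{\text{shell}}M(z,\lambda^{-1}\nabla w)$; the resulting series can diverge for every fixed $\delta$, and neither the small measure of the region nor the superlinear growth of $M$ rescues this (the integrand is already $M(z,\lambda^{-1}\nabla w)$, so superlinearity has been spent). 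Note also that Proposition~\ref{sup}, which you invoke, concerns mollification in the \emph{time} variable where $M$ is constant and Jensen applies directly; it does not transfer to spatial mollification.

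The paper's Lemma~\ref{modular-topology} sidesteps this by a different mechanism: the log--H\"older bound~\eqref{log} is applied not to $\nabla w(z)$ but to the \emph{mollified} function ${\cal S}_\delta z$, which obeys $|{\cal S}_\delta z(t,x)|\le c\,\delta^{-d}$ thanks to $z\in L^\infty(0,T;L^1(\Omega))$; then $|{\cal S}_\delta z|^{H/\log(1/\delta)}\le(c\,\delta^{-d})^{H/\log(1/\delta)}\le C$ uniformly in $\delta$. Only after this comparison is Jensen applied, to the $x$--independent local minorant $m_k^\delta(\xi)=\inf_{G_{\delta,k}}M(\cdot,\xi)$ over a $\delta$--cube covering. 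To feed this lemma the paper first truncates to $T_\ell(u)$ (so that the lower--order piece $T_\ell(u)\nabla\theta_i$ is bounded, hence in $L_M$, and each localized gradient $\nabla(\theta_iT_\ell(u))$ lies in $L_M$) and inserts an intermediate layer of simple functions $\xi_n$: the mollification error ${\cal S}_\delta\xi_n-\xi_n$ is then handled by dominated convergence via~\eqref{int}, while Lemma~\ref{modular-topology} is used only as a uniform modular bound to transfer the simple--function approximation through ${\cal S}_\delta$. Your cancellation trick $\sum_k u\nabla\psi_k=0$ is neat, but it leaves you having to mollify each $\psi_k\nabla u$ separately without the $L^\infty_tL^1_x$ control that drives the paper's argument.
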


\begin{proof}
Already for Lipschitz domain $\Omega$ there exists a finite family of star-shaped Lipschitz domains $\{\Omega_i\}$ such that 
$$\Omega=\bigcup\limits_{i\in J}\Omega_i,$$
cf.~\cite{Novotny}. We introduce the partition of unity $\theta_i$ with $0\le\theta_i\le1,\, \theta_i\in {\cal C}^\infty_c(\Omega_i), \,
{\rm supp} \,\theta_i=\Omega_i, \sum_{i\in J}\theta_i(x)=1$ for $x\in\Omega$
and  define the truncation operator $T_\ell(u)$  as follows 
\begin{equation}
T_\ell(u)=\left\{
\begin{array}{rcl}
u&{\rm if}&|u|\le \ell,\\
\ell&{\rm if}& u>\ell,\\
-\ell&{\rm if}& u< -\ell.
\end{array}\right.\end{equation}
Define $Q_i:=(0,T)\times \Omega_i$. Obviously $$T_\ell(u)\in L^\infty(0,T;L^2(\Omega))\cap L^1(0,T;W^{1,1}_0(\Omega)), \nabla T_\ell u\in L_M(Q)$$ and for each 
$i\in J$
$$\theta_i\cdot T_\ell(u)\in L^\infty(Q_i)\cap L^1(0,T;W^{1,1}_0(\Omega_i))\cap L^\infty(0,T;L^2(\Omega_i)) .$$
 Introducing the truncation of $u$ was necessary to provide that 
$$\nabla T_\ell(u)\cdot \theta_i + T_\ell(u)\cdot \nabla \theta_i= \nabla (T_\ell(u)\cdot \theta_i) \in
 L_M(Q_i).$$
Without loss of generality assume that all $\Omega_i$ are  star-shaped domains with respect to a ball of radius R, i.e. $B(0,R)$.
We define for $(t,x)\in(0,T)\times\Omega$
\begin{equation}\label{Sdelta2}
\begin{split}
 {\cal S}_\delta (\theta_iT_\ell(u))(t,x)
:=\frac{1}{\left(1-{\delta}/{R}\right)}\int_Q
  S_\delta(x-y)\theta_i T_\ell(u)\left(t,\left(1-{\delta}/{R}\right)y\right)  \,dy.
\end{split}\end{equation}
Our aim is to show that there exists a constant $\lambda>0$ such that
\begin{equation}\label{216}
\lim\limits_{l\to\infty}\lim\limits_{\delta\to 0_+}\varrho_{M,Q_i}\left( \frac{\nabla u -\nabla {\cal S}_\delta(\theta_i T_\ell(u))}{\lambda} \right)
=0.
\end{equation}
For this purpose we introduce a sequence of simple functions
$$\xi_n(t,x):=\sum_{j=1}^n \alpha_j^n \bbbone_{G_j}(t,x),\quad  \alpha_j^n \in\R,\ \bigcup_{j\in\{1,\ldots,n\}} G_j=Q$$ 
which converges to $ \nabla (\theta_i\cdot T_\ell(u)) $ modularly in $L_M(Q)$. 
Moreover, let $\lambda_0, \lambda_1, \lambda_2$ be some appropriate constants which we specify later such that the following estimate holds
\begin{equation}
\begin{split}
\varrho_{M,Q_i}&\left( \frac{\nabla u -\nabla {\cal S}_\delta(\theta_i T_\ell(u))}{\lambda} \right)\\
&\le \frac{\lambda_0}{\lambda}
\rho_{M,Q_i}\left(\frac{ {\cal S}_\delta  \nabla(\theta_iT_\ell( u))- {\cal S}_\delta \xi_n}{\lambda_0}\right)
+ \frac{\lambda_0}{\lambda}
\rho_{M,Q_i}\left(\frac{\nabla (\theta_iT_\ell( u))-\xi_n}{\lambda_0}\right)\\
&\quad+\frac{\lambda_1}{\lambda}\rho_{M,Q_i}\left(\frac{ {\cal S}_\delta \xi_n-\xi_n}{\lambda_1}\right)
+\frac{\lambda_2}{\lambda}\varrho_{M,Q_i}\left( \frac{ \nabla u -\nabla (T_\ell(u)\theta_i)}{\lambda_2}\right)\\
&=I_1+I_2+I_3+I_4.
\end{split}\end{equation}
Consider first $I_3$. 
The existence of a sequence $\xi_n$ is provided by Lemma \ref{lem-dense}.
Let 
$B_\delta:=\{y\in \Omega\ :\ |y|<\delta\}$.
Then 
\begin{equation}
\begin{split}
{\cal S}_\delta \xi_n-\xi_n
=\int_{B_\delta}  S_\delta(y)\sum\limits_{j=1}^n\left(\alpha_j^n\bbbone_{G_j}
(t, (1-\delta/R)(x-y))-
\alpha_j^n\bbbone_{G_j}(t, x)\right) \,dy
\end{split}\end{equation}
and the  Jensen inequality and Fubini theorem yield
\begin{equation}\label{proste}\begin{split}
&\rho_{M,Q_i}\left(\frac{ {\cal S}_\delta \xi_n(t,x)-\xi_n}{\lambda_1}\right)\\
&=\int_Q M(x,\frac{1}{\lambda_1}\int_{B_1}  S(y)\sum\limits_{j=1}^n(\alpha_j^n\bbbone_{G_j}
(t, (1-\delta/R)(x-\delta y))\\&\qquad-
\alpha_j^n\bbbone_{G_j}(t, x)) \,dy) \,dt\,dx\\
&\le \int _{B_1}  S(y)( \int_Q  M(x,\frac{1}{\lambda_1} \sum\limits_{j=1}^n\alpha_j^n(\bbbone_{G_j}
(t, (1-\delta/R)(x-\delta y))\\&\qquad-
\bbbone_{G_j}(t, x))) \,dt\,dx) \,dy.\\
\end{split}\end{equation}
Note that  $\{\frac{1}{\lambda_1} \sum\limits_{j=1}^n\alpha_j^n\left(\bbbone_{G_j}
(t, (1-\delta/R)(x-\delta y))-
\bbbone_{G_j}(t, x)\right) \,dt\,dx) \}_{\delta>0}$ converges a.e. in $Q$ to zero as $\delta\to0_+$ and 
\begin{equation}\begin{split}\label{estimate2}
M(x, \frac{1}{\lambda_1} \sum\limits_{j=1}^n\alpha_j^n\left(\bbbone_{G_j}
(t, (1-\delta/R)(x-\delta y))-
\bbbone_{G_j}(t, x)\right)\\
\le \sup\limits_{|z|=1}M(x,\frac{1}{\lambda_1} \sum\limits_{j=1}^n\alpha_j^n z).
\end{split}\end{equation}
Assumption \eqref{int} provides that  the right-hand side of \eqref{estimate2} is integrable, hence the Lebesgue dominated convergence theorem allows to conclude that $I_3$ vanishes as $\delta\to0_+$. 
Lemma~\ref{modular-topology}  allows to estimate $I_1$ on each $\Omega_i$ as follows
\begin{equation}
I_1=\frac{\lambda_0}{\lambda}\rho_{M,Q_i}\left(\frac{ {\cal S}_\delta  (\nabla(\theta_iT_\ell( u))- \xi_n)}{\lambda_0}\right)
\le c \rho_{M,Q_i}\left(\frac{ \nabla(\theta_iT_\ell( u))- \xi_n}{\lambda_0}\right)
\end{equation}
and hence by Lemma~\ref{lem-dense} there exists a constant $\lambda_0$ such that 
$$\lim\limits_{n\to\infty} (I_1+I_2)=0.$$
Moreover, as $\ell\to\infty$ we observe the following convergence
$$T_\ell(u)\to u\ \ \  {\rm strongly \,in}\ \ \   L^1(0,T;W^{1,1}_0(\Omega))$$
and hence also, at least for a subsequence, almost everywhere.
To find a uniform estimate we observe  that 
 $M(x,\nabla T_\ell(u(t,x)))\le M(x,\nabla u(t,x))$ a.e. in Q. Indeed,  $T_\ell(u)$ and $u$ coincide for $|u|\le \ell $ and on the remaining two sets, where $T_\ell (u)$
 is equal to $\ell$ or $-\ell$ we have that
 $ T_\ell(u)\in L^1(0,T;W^{1,1}_0(\Omega))$, then $\nabla T_\ell(u)$  is almost everywhere equal to zero.
  Consequently $M(x,\nabla T_\ell(u(t,x)))$ is uniformly 
integrable, which combined with pointwise convergence provides
$$\nabla T_\ell(u)\to \nabla u\ \ \  {\rm modularly\  in}\ \ \  L_M(Q)$$
as $\ell\to\infty$, hence there exists a constant $\lambda_2$ such that $\lim_{\ell\to\infty} I_4=0$.
Finally, choosing $\lambda>\max\{3 \lambda_0, 3\lambda_1, 3\lambda_2\}$, passing first with $\delta\to0_+$, then 
$n\to\infty$ and $\ell\to\infty$ we arrive to \eqref{216}.

The strong convergence in $L^2$ is straighforward, since an $N-$function $M(x,a)=|a|^2$ satisfies 
$\Delta_2-$condition and the strong and modular convergence coincide.

\end{proof}

\begin{lemma}\label{modular-topology} Let an $N-$function satisfy condition (M),
  $S$ and $S_\delta$ be given by \eqref{S}
and  assume that $\Omega$ is a star-shaped domain  with respect to a ball centered at the origin $B(0,R)$ for some $R>0$.
We define  the family of operators
\begin{equation}\label{Sdelta}
 {\cal S}_\delta z(t,x):=\left(1-{\delta}/{R}\right)^{-1}\int_\Omega
  S_\delta(x-y)z\left(t,\left(1-{\delta}/{R}\right)y\right)  \,dy.
\end{equation}
Then there exists a constant $c>0$ (independent of $\delta$) such that 
\begin{equation}\label{cont}
\int_Q M(x, {\cal S}_\delta z(t,x))\,dx\,dt\le c\int_Q M(x,z(t,x))\,dx\,dt
\end{equation}
holds for every $z\in L_M(Q)\cap L^\infty(0,T;L^1(\Omega))$.
\end{lemma}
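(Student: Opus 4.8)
The plan is to bound the modular of $\mathcal S_\delta z$ by a change of variables that separates the two effects in the operator: the rescaling $y\mapsto(1-\delta/R)y$ of the argument, and the mollification by $S_\delta$. First I would write, using the definition \eqref{Sdelta} and Jensen's inequality with the probability measure $S_\delta(x-y)\,dy$,
\begin{equation*}
M\bigl(x,\mathcal S_\delta z(t,x)\bigr)\le \int_\Omega S_\delta(x-y)\,M\!\Bigl(x,(1-\delta/R)^{-1}z\bigl(t,(1-\delta/R)y\bigr)\Bigr)\,dy,
\end{equation*}
noting that convexity lets the factor $(1-\delta/R)^{-1}$ be absorbed since for $\delta$ small this factor is bounded by, say, $2$, and $M(x,2a)$ need not be controlled — so instead I would keep it inside and exploit that $v\mapsto M(x,v)$ composed with the scalar $(1-\delta/R)^{-1}\le 2$ is handled by convexity as $M(x,(1-\delta/R)^{-1}v)\le (1-\delta/R)^{-1}M(x,v)+\ldots$; more cleanly, since $1\le(1-\delta/R)^{-1}$ one uses $M(x,\tau v)\le \tau M(x,v)$ for $\tau\ge1$ only after checking $M(x,0)=0$ and convexity give $M(x,\tau v)\le\ldots$ — this is the one place needing care, and I would simply restrict to $\delta<R/2$ so the scaling constant lies in $[1,2]$ and bound it by a fixed constant times $M$ plus a negligible error, or better, rescale $\lambda$. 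Then integrate over $x\in\Omega$ and $t\in(0,T)$, apply Fubini, and substitute $w=(1-\delta/R)y$ in the inner integral.

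The key step is then the spatial-regularity estimate: after the substitution we must compare $M(x,\cdot)$ with $M(w,\cdot)$ where $|x-w|$ is small (of order $\delta$ plus the diameter-scaled displacement), and here condition (M), i.e. \eqref{log}, is exactly what is needed. I would split the domain of integration in $\xi$ (the value of $z$) into $\{|z|\le1\}$ and $\{|z|>1\}$. On $\{|z|\le1\}$ one uses \eqref{int}: $M(x,z)$ is dominated by $\sup_{|\zeta|\le1}M(x,\zeta)$ which is integrable over the bounded set $\Omega$, uniformly in $\delta$, giving a bounded contribution (this is where the hypothesis $z\in L^\infty(0,T;L^1(\Omega))$ and boundedness of $\Omega$ enter to keep things finite, though for the part $|z|\le 1$ it is automatic). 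On $\{|z|>1\}$, \eqref{log} gives $M(x,\xi)\le M(w,\xi)\,|\xi|^{H/\log(1/|x-w|)}$, and since $|x-w|\lesssim\delta$ one has $|\xi|^{H/\log(1/|x-w|)}$ which must be controlled; one absorbs this factor by noting it is bounded on the set where $|\xi|$ is not too large and, on the set where $|\xi|$ is large, using the superlinear growth of $M$ together with the Lebesgue-measure smallness — actually the standard trick (following \cite{GwMiWr2012, Be2011}) is that $|x-w|\le c\delta$ implies $\log(1/|x-w|)\ge\log(1/(c\delta))\to\infty$, so for $\delta$ small the exponent $H/\log(1/|x-w|)$ is as small as we like; combined with the fact that on $\{|\xi|>1\}$ where $M(w,\xi)$ is already integrated against a fixed $L^1$ bound, one gets $|\xi|^{\text{tiny}}\le 1+\text{tiny}\cdot|\xi|\le 1+ C M(w,\xi)$ using superlinearity, eventually yielding the bound $\int_Q M(w,z(t,w))\,dw\,dt$ times a constant plus an additive constant from the $|\xi|\le 1$ part. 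Since we only need \eqref{cont} with a constant $c$ (not $c=1$) and an arbitrary additive constant would in fact not be allowed by the statement as written, I would instead arrange the estimate so the additive error is absorbed: replacing $z$ by $z/\lambda$ and using that the additive terms come from $\{|z/\lambda|\le1\}$ which for the purposes of \eqref{216} is exactly the regime the outer proof tolerates — but as Lemma~\ref{modular-topology} is stated with a clean multiplicative bound, I expect the intended argument keeps the additive constant zero by using \eqref{int} more carefully, namely that the $|z|\le 1$ contribution is itself $\le c\int_Q M(w,z)\,dw\,dt$ is false in general, so the honest reading is that the lemma's constant $c$ and the hypothesis $z\in L^\infty(0,T;L^1(\Omega))$ together with $\int_Q M(x,z)<\infty$ make the right-hand side already bounded below away from zero unless $z\equiv0$, in which case both sides vanish.

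In summary, the steps in order are: (1) apply Jensen's inequality to pull $M$ inside the mollification average; (2) handle the scaling constant $(1-\delta/R)^{-1}\in[1,2]$ by restricting to $\delta<R/2$ and using convexity/homogeneity bounds; (3) Fubini and the substitution $w=(1-\delta/R)y$, tracking that the Jacobian is $(1-\delta/R)^{-d}$ which is bounded for $\delta<R/2$; (4) use condition (M): on $\{|z|\le1\}$ invoke \eqref{int} for a $\delta$-uniform bound, on $\{|z|>1\}$ invoke \eqref{log} and the smallness of $H/\log(1/|x-w|)$ for small $\delta$ together with superlinear growth of $M$ to absorb the extra power of $|\xi|$ into a constant multiple of $M(w,\xi)$. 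The main obstacle is step (4): making the exponent-absorption rigorous and uniform in $\delta$, i.e. showing $|\xi|^{H/\log(1/|x-w|)}\le C_1 + C_2 M(w,\xi)$ with constants independent of $\delta$ for $\delta$ in a suitable range, which is where the precise form of the logarithmic condition (M) is indispensable and mirrors the known arguments in the elliptic setting of \cite{GwMiWr2012}.
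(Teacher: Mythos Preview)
Your outline has the right ingredients (Jensen, a change of variables, and condition~(M)), but the order in which you apply them creates a gap you yourself flag and cannot close. By applying Jensen \emph{first}, you land on $M\bigl(x,(1-\delta/R)^{-1}z(t,w)\bigr)$ with the raw, unbounded values of $z$ inside. When you then invoke \eqref{log} you must control $|z|^{H/\log(1/|x-w|)}$; for each fixed small $\delta$ this exponent is a fixed positive number, and since $|z|$ is unbounded there is no estimate of the form $|\xi|^{\varepsilon}\le C_1+C_2 M(w,\xi)$ with $C_1,C_2$ independent of $\delta$ (and even if there were, an additive $C_1$ would violate the purely multiplicative conclusion \eqref{cont}). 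Your handling of the prefactor $(1-\delta/R)^{-1}\in[1,2]$ is also problematic without a $\Delta_2$ assumption, which you explicitly do not have.

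The paper reverses the order: it applies condition~(M) to the \emph{mollified} function $\mathcal S_\delta z$ before any Jensen step. The hypothesis $z\in L^\infty(0,T;L^1(\Omega))$ is used precisely here, to obtain the pointwise bound $|\mathcal S_\delta z(t,x)|\le c\,\delta^{-d}$. Covering $\Omega$ by cubes $D_{\delta,k}$ of side $\delta$ and setting $m_k^\delta(\xi):=\inf_{G_{\delta,k}}M(\cdot,\xi)$ over the doubled cube, one gets from \eqref{log}
\[
\frac{M(x,\mathcal S_\delta z)}{m_k^\delta(\mathcal S_\delta z)}\le |\mathcal S_\delta z|^{H/\log(1/(\delta\sqrt d))}\le (c\,\delta^{-d})^{H/\log(1/(\delta\sqrt d))}\le C
\]
with $C$ \emph{independent of} $\delta$, because $\delta^{-d}$ raised to an exponent $\sim H/\log(1/\delta)$ equals $e^{dH}$ up to harmless constants. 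This exact cancellation between the mollifier's $\delta^{-d}$ blow-up and the $\log(1/\delta)$ in the denominator of the exponent is the heart of the proof and is what your step~(4) misses. Only after this does one apply Jensen, now to the $x$-independent convex function $m_k^\delta$, which commutes cleanly with the convolution; a change of variables and $m_k^\delta\le M$ then yield the clean multiplicative bound \eqref{cont} with no additive remainder and no need to touch the $(1-\delta/R)^{-1}$ factor separately.
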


\begin{proof}
Since $\Omega$ is
a star-shaped domain with respect to  $B(0,R)$, then for each $\lambda\in(0,1)$
$$(1-\lambda)x+\lambda y\in \Omega\quad\mbox{for each }x\in \Omega, y\in B(0,R).
$$
Hence 
for $\delta<R$ we may choose $\lambda=\delta/R$ and conclude that 
$$\left(1-\frac{\delta}{R}\right)\Omega+\delta B(0,1)\subset \Omega.$$

Let ${\cal S}_\delta z(t,x)$ be defined by~\eqref{Sdelta}. 
Since $\overline{\left(1-\frac{\delta}{R}\right)\Omega+\delta B(0,1)}\subset \Omega,$ then 
it holds
$ {\cal S}_\delta z\in L^\infty(0,T;{\cal C}^\infty_c(\Omega))$. For every $\delta>0$ there exists $N=N(\delta)$ such that  a family of closed
cubes $\{D_{\delta,k}\}_{k=1}^N$ with disjoint interiors and the length of an edge equal to $\delta$ covers $\Omega$, i.e.
$\Omega\subset \bigcup_{k=1}^ND_{\delta,k}$. 
 Hence 
%
\begin{equation}
\int_0^T\int_\Omega M(x,{\cal S}_\delta z(t,x))\,dx=
\sum\limits_{k=1}^{N}
\int_0^T\int_{D_{\delta,k}\cap\Omega}M(x,{\cal S}_\delta z(t,x))\,dx\,dt.
\end{equation}
For each $\delta, k$ by $G_{\delta,k}$ we shall mean a cube with an edge of the length $2\delta$ and centered the same as the corresponding $D_{\delta,k}$. 
Note that if $x\in D_{\delta,k}$, then there exist $2^d$ cubes 
$G_{\delta,k}$ such that $x\in G_{\delta,k}$. 
 Define 
\begin{equation}
m_{k}^\delta(\xi):=\inf_{(t,x)\in ((0,T)\times G_{\delta,k})\cap Q}M(x,\xi)\le 
\inf_{(t,x)\in ((0,T)\times D_{\delta,k})\cap Q}M(x,\xi)
\end{equation}
and 
\begin{equation}
\alpha_{k}(t,x,\delta):=\frac{M(x,{\cal S}_\delta z(t,x))}{m_{k}^\delta({\cal S}_\delta z(t,x))}.
\end{equation}
Then 
\begin{equation}
\int_0^T\int_\Omega M(x,{\cal S}_\delta z(t,x))\,dx\,dt=\sum\limits_{k= 1}^N
\int_0^T\int_{D_{\delta,k}\cap\Omega}\alpha_k(t,x,\delta)m_k^\delta
({\cal S}_\delta z(t,x)) \,dx\,dt.
\end{equation}
We are aiming to estimate the term 
 $\alpha_{k}(t,x,\delta)$ and the main tool here will be the regularity with respect to $x$, which is assumed on M, namely
 condition \eqref{log}. 
 For this purpose let now $(t_k,x_k)$ be the point where the infimum of $M(x,\xi)$ is obtained in the set $(0,T)\times G_{\delta,k}$. Then 
\begin{equation}
\alpha_{k}(t,x,\delta)=\frac{M(x,{\cal S}_\delta z(t,x))}{M(x_k,{\cal S}_\delta z(t,x))}\le |{\cal S}_\delta z(t,x) |^\frac{H}{\ln\frac{1}{|x-x_k|}}.
\end{equation}
Without loss of generality one can assume that 
$\|z\|_{L^\infty(0,T;L^1(\Omega))}\le 1$. By H\"older inequality \eqref{hoelder} we obtain for $\delta<R$
\begin{equation}\label{22}\begin{split}
|{\cal S}_\delta z(t,x)|&\le\left| \frac{1}{\delta^{d}}\left(1-\frac{\delta}{R}\right)^{-1}\sup_{B(0,1)}|S(y)|\int_\Omega\bbbone_{B(0,\delta)}(y)z(t,(1-\frac{\delta}{R})y) \,dy\right|\\
&\le \frac{2}{\delta^{d}}\sup_{B(0,1)}|S(y)|\|z\|_{L^\infty(0,T;L^1(\Omega))}
\le \frac{c}{\delta^{d}}.
\end{split}\end{equation}
Since $x\in D_{\delta,k}$ and $x_k\in G_{\delta,k}$ then   $|x-x_k|\le \delta\sqrt{d}$  and for sufficiently small $\delta$, e.g. $\delta<\frac{1}{2\sqrt{d}}$ 
with  use of  \eqref{22} we obtain 
\begin{equation}\begin{split}
 |{\cal S}_\delta z(t,x) |^\frac{H}{\ln\frac{1}{\delta\sqrt{d}}}&\le(c\delta^{-d})^\frac{H}{\ln\frac{1}
 {\delta\sqrt{d}}}
 \le c^\frac{H}{\ln2} \cdot d^\frac{dH}{\ln 4}\left(e^{\ln \delta\sqrt{d}}\right)^\frac{dH}{\ln\delta\sqrt{d}}
 \le d^\frac{dH}{\ln 4} c^\frac{H}{\ln2}e^{dH}:=C.
\end{split}\end{equation}
Consequently
\begin{equation}\label{osza}
|\alpha_{k}(t,x,\delta)|\le C.
\end{equation}
Define $\tilde M(x,\xi):=\max_{k}m_{k}^\delta(\xi)$ where the maximum is taken with respect to all the sets 
$(0,T)\times G_{\delta,k}$. Obviously  $\tilde M(x,\xi)\le M(x,\xi)$ for all $(t,x)\in Q.$
Using the uniform estimate \eqref{osza} and the Jensen inequality we have 
\begin{equation}\begin{split}
\int_Q &M(x,{\cal S}_\delta z(t,x))dxdy\le C\sum\limits_{k=1}^N
\int_0^T\int_{D_{\delta,k}}m_{k}^\delta({\cal S}_\delta z(t,x)) 
\,dx\,dt\\&
\le C\sum\limits_{k=1}^N\int_{B(0,\delta)}|S_\delta(y)|\,dy
\int_0^T\int_{(1-\frac{\delta}{R})G_{\delta,k}}
m_{k}^\delta( z(t,x))\,dx\,dt\\&
\le 2^{d}C\int_{Q}\tilde M(x,z(t,x)) \,dx\,dt
\le 2^{d}C\int_{Q} M(x,z(t,x)) \,dx\,dt
\end{split}\end{equation}
which completes the proof. 
\end{proof}
\appendix
\section{Auxilary facts}

\begin{lemma}\label{lem-dense}
Let ${\mathbb S}$ be the set of all simple, integrable functions on $Q$ and let \eqref{int} hold. Then ${\mathbb S}$ is dense with respect to the modular topology in $L_M(Q)$. 
\end{lemma}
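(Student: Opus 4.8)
The plan is to prove density of simple integrable functions in $L_M(Q)$ with respect to the modular topology by a direct truncation-and-approximation argument. The essential point is that modular convergence only requires finding \emph{some} $\lambda>0$ for which the modular of the scaled difference tends to zero, which gives us considerable freedom. Fix $z\in L_M(Q)$; by definition of the Orlicz space $L_M(Q)$ there is some $\lambda_0>0$ with $\rho_{M,Q}(z/\lambda_0)<\infty$, i.e. $M(x,z(t,x)/\lambda_0)\in L^1(Q)$. I will work throughout with the fixed scaling parameter $\lambda:=2\lambda_0$ (or similar), so that all objects I produce are measured against $M(\cdot,\cdot/\lambda_0)$, which is a genuine $L^1$ function.

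First I would reduce to the case of a bounded function with bounded support. Set $z_R := z\,\bbbone_{\{|z|\le R\}}$; then $z_R\to z$ a.e. as $R\to\infty$, $|z-z_R|\le |z|$ pointwise, and by convexity and $M(x,0)=0$ we have $M(x,(z-z_R)/\lambda_0)\le M(x,z/\lambda_0)$, so the dominated convergence theorem gives $\rho_{M,Q}((z-z_R)/\lambda_0)\to 0$. This already reduces matters to approximating an essentially bounded function, which we may further multiply by indicators of an exhausting sequence of sets of finite measure (the full $Q$ has finite measure since $\Omega$ is bounded and $T<\infty$, so this step is actually vacuous, but the truncation in amplitude is the substantive one). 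Next, for a bounded measurable $w$ with $|w|\le R$, classical measure theory provides a sequence of simple functions $\xi_n$ with $|\xi_n|\le R$ and $\xi_n\to w$ everywhere on $Q$; since the ranges are bounded, each $\xi_n$ is integrable, hence $\xi_n\in\mathbb{S}$. Then $|(w-\xi_n)/\lambda_0|$ is bounded by the constant $2R/\lambda_0$, and by \eqref{int} the function $(t,x)\mapsto \sup_{|c|\le 2R/\lambda_0} M(x,c)$ — equivalently, using convexity, a finite multiple of $\sup_{|z|=1}M(x,z\cdot 2R/\lambda_0)$ — is integrable over $Q$; this is exactly the kind of domination used in the proof of Theorem~\ref{main1} around \eqref{estimate2}. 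Dominated convergence then yields $\rho_{M,Q}((w-\xi_n)/\lambda_0)\to 0$.

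Combining the two steps with the triangle-type inequality for modulars (convexity: $\rho_{M,Q}((a+b)/(2\lambda_0))\le \tfrac12\rho_{M,Q}(a/\lambda_0)+\tfrac12\rho_{M,Q}(b/\lambda_0)$) gives, for $\lambda=2\lambda_0$, that $\rho_{M,Q}((z-\xi)/\lambda)$ can be made arbitrarily small by first choosing $R$ large and then $n$ large, which is precisely modular convergence $\xi_n\modular{M} z$ with the single scaling constant $\lambda$. The main obstacle — and the only place the hypothesis \eqref{int} genuinely enters — is the domination in the second step: without any $\Delta_2$-type control one cannot estimate $M(x,\cdot)$ at a large argument by $M(x,\cdot)$ at a smaller one, so integrability of the approximating modulars must be extracted directly from \eqref{int}, which guarantees $\int_G M(x,z)\,dx<\infty$ for \emph{every} constant vector $z$ and every bounded $G$; boundedness of $\Omega$ and of the truncated functions is what makes this applicable. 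Everything else is routine dominated convergence.
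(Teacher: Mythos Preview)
Your argument is correct. The paper does not actually prove this lemma: it merely cites Musielak's book (Theorem~7.6) for the isotropic case and asserts that the anisotropic case follows the same lines. Your two-step scheme---truncation in amplitude to reduce to a bounded function, then pointwise approximation of the bounded function by simple functions, with both limits justified by dominated convergence---is exactly the standard proof one finds there, so there is nothing to compare.

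One point that deserves an extra sentence, since you flag it but do not fully justify it: condition \eqref{int} gives $\int_G M(x,z)\,dx<\infty$ only for each \emph{fixed} vector $z$, whereas you need integrability of $g(x):=\sup_{|c|\le K}M(x,c)$ with $K=2R/\lambda_0$. This does follow: by convexity and $M(x,0)=0$ the supremum is attained on the sphere $|c|=K$, and writing any such $c=\sum_i c_i\mathbf e_i$ as a convex combination of $0$ and the $2d$ vectors $\pm\sqrt d\,K\,\mathbf e_i$ gives
\[
\sup_{|c|\le K}M(x,c)\ \le\ \sum_{i=1}^d\bigl(M(x,\sqrt d\,K\,\mathbf e_i)+M(x,-\sqrt d\,K\,\mathbf e_i)\bigr),
\]
a finite sum of functions each integrable over $Q$ by \eqref{int}. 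The paper relies on the same fact around \eqref{estimate2} without spelling it out either.
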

For the proof in isotropic case see \cite[Theorem 7.6]{Musielak}. The anisotropic case follows exactly the same lines.

Below we formulate some facts concerning convergence in generalized Musielak-Orlicz spaces.
For the proofs of these lemmas and propositions see~\cite{GwSw2008}. 

\begin{lemma}\label{modular-conv}
Let $z^j:Q\to\R^d$ be a measurable sequence. Then
$z^j\modular{M} z$ in $L_M(Q)$ modularly if and only if
$z^j\to z$ in measure and there exist some $\lambda>0$ such that
the sequence $\{M(x,\lambda z^j)\}$ is uniformly integrable in $L^1(Q)$,
i.e.,
$$\lim\limits_{R\to\infty}\left(\sup\limits_{j\in\N}\int_{\{(t,x):|M(x,\lambda z^j)|\ge
R\}}M(x,\lambda z^j)dxdt\right)=0.$$
\end{lemma}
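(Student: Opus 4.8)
The plan is to prove the two implications separately, reducing in each case the modular statement to a Vitali-type equi-integrability argument on the finite-measure space $Q$. The only structural features of $M$ I would use are that $M(x,\cdot)$ is convex, continuous, vanishes exactly at the origin, and that $\int_G M(x,z)<\infty$ for bounded measurable $G$ (condition \eqref{int}); the two elementary consequences of convexity, $M(x,ta)\le tM(x,a)$ for $t\in[0,1]$ and $M\big(x,\tfrac12 u+\tfrac12 v\big)\le\tfrac12 M(x,u)+\tfrac12 M(x,v)$, will carry the whole argument of transferring equi-integrability between the families $\{M(x,\lambda z^j)\}$ and $\{M(x,\mu(z^j-z))\}$ for comparable constants $\lambda,\mu$.

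For the ``if'' direction I would assume $z^j\to z$ in measure and $\{M(x,\lambda_0 z^j)\}_j$ uniformly integrable for some $\lambda_0>0$. Since $|Q|<\infty$, uniform integrability already bounds $\sup_j\int_Q M(x,\lambda_0 z^j)$, so after passing to an a.e.\ convergent subsequence, Fatou's lemma gives $M(x,\lambda_0 z)\in L^1(Q)$. Setting $\lambda:=2/\lambda_0$, convexity yields $M\big(x,\tfrac{z^j-z}{\lambda}\big)\le\tfrac12 M(x,\lambda_0 z^j)+\tfrac12 M(x,\lambda_0 z)$, so $\{M(x,(z^j-z)/\lambda)\}_j$ is dominated by a uniformly integrable family plus a fixed $L^1$ function, hence is uniformly integrable; and it converges to $0$ in measure, because $z^j\to z$ in measure and $M(x,\cdot)$ is continuous with $M(x,0)=0$ (one argues along subsequences to pass from ``in measure'' for $z^j-z$ to ``in measure'' for $M(x,(z^j-z)/\lambda)$). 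Vitali's convergence theorem on $Q$ then gives $\rho_{M,Q}((z^j-z)/\lambda)\to0$, i.e.\ $z^j\modular{M}z$.

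For the ``only if'' direction I would start from $\rho_{M,Q}((z^j-z)/\lambda)\to0$ for some $\lambda>0$. To get convergence in measure, fix $r>0$ and set $g_r(x):=\inf_{|b|=r}M(x,b)$; for each $x$ this infimum is attained and strictly positive (continuity, convexity, $M(x,\cdot)=0$ only at $0$), and $g_r\le M(x,b_0)$ for a fixed $b_0$ with $|b_0|=r$, so $g_r\in L^1(Q)$ by \eqref{int}. By convexity $M(x,a)\ge M(x,ra/|a|)\ge g_r(x)$ whenever $|a|\ge r$, so with $r=\varepsilon/\lambda$ one gets $\int_{\{|z^j-z|\ge\varepsilon\}}g_r\le\rho_{M,Q}\big(\tfrac{z^j-z}{\lambda}\big)\to0$, and since $g_r>0$ a.e.\ this forces $|\{|z^j-z|\ge\varepsilon\}|\to0$ (Chebyshev's inequality applied to $g_r$), i.e.\ $z^j\to z$ in measure. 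For uniform integrability of $\{M(x,\lambda' z^j)\}$ I would pick $\lambda'>0$ with $2\lambda'\lambda\le1$ and $2\lambda'$ small enough that $M(x,2\lambda' z)\in L^1(Q)$ (available since $z\in L_M(Q)$), and use $M(x,\lambda' z^j)\le\tfrac12 M(x,2\lambda' z)+\tfrac12 M(x,2\lambda'(z^j-z))\le\tfrac12 M(x,2\lambda' z)+\tfrac12 M\big(x,\tfrac{z^j-z}{\lambda}\big)$; the right-hand side is a fixed $L^1$ function plus half of an $L^1$-convergent, hence uniformly integrable, sequence, so $\{M(x,\lambda' z^j)\}_j$ is uniformly integrable.

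The step I expect to require the most care is precisely the passage to convergence in measure in the ``only if'' direction: an $x$-dependent anisotropic $N$-function need not admit a positive minorant that is uniform in $x$ on a sphere $\{|a|=r\}$ (think of $M(x,a)=|a|^{p(x)}$ with unbounded $p$ and $|a|<1$), so one cannot simply bound $M(x,(z^j-z)/\lambda)$ below by a constant on $\{|z^j-z|\ge\varepsilon\}$. Replacing that would-be constant by the genuinely $x$-dependent but a.e.-positive and globally integrable minorant $g_r$, and coupling Chebyshev's inequality with $g_r\in L^1(Q)$, is what makes the argument work. Everything else is routine: convexity estimates, Fatou's lemma, Vitali's theorem on a finite measure space, and the standard fact that an $L^1$-convergent sequence is uniformly integrable.
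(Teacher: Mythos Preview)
The paper does not actually prove this lemma: it is stated in the appendix together with Lemma~\ref{uni-int} and Propositions~\ref{product}--\ref{sup}, and the reader is referred to \cite{GwSw2008} for all the proofs. So there is no in-paper argument to compare yours against.

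Your proposal is correct and follows what is essentially the standard route (and the one in the cited reference): both implications are reduced to Vitali's theorem on the finite-measure set $Q$, with the convexity splitting $M\big(x,\tfrac{a+b}{2}\big)\le\tfrac12 M(x,a)+\tfrac12 M(x,b)$ used to transfer uniform integrability between $\{M(x,\lambda z^j)\}$ and $\{M(x,\mu(z^j-z))\}$. Your treatment of the genuinely anisotropic, $x$-dependent point---deducing convergence in measure from $\rho_{M,Q}((z^j-z)/\lambda)\to0$ via the a.e.-positive integrable minorant $g_r(x)=\inf_{|b|=r}M(x,b)$---is the right idea; the final step is not quite Chebyshev but the splitting $|\{|z^j-z|\ge\varepsilon\}|\le|\{g_r<\delta\}|+\delta^{-1}\int_{\{|z^j-z|\ge\varepsilon\}}g_r$, where the first term is small because $g_r>0$ a.e.\ and $|Q|<\infty$. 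One tacit hypothesis you invoke is the evenness of $M$ in the second variable (when you replace $M(x,-\lambda_0 z)$ by $M(x,\lambda_0 z)$ in the ``if'' direction); this is standard for $N$-functions and is implicit in the Luxemburg/Orlicz norm being a norm, but it is worth making explicit since the paper's stated definition does not list it.
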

\begin{lemma}\label{uni-int}
Let $M$ be an  ${ N}$--function and for all $j\in\N$ let $\int_Q
M(x,z^j)\,dx\,dt\le c$.
Then the sequence $\{z^j\}$ is
uniformly integrable in $L^1(Q)$.
\end{lemma}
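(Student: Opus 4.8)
The plan is to read off uniform integrability directly from the defining \emph{superlinear} growth of the $N$-function, which is assumed uniform in the spatial variable, combined with the uniform bound $\di\int_Q M(x,z^j)\,dx\,dt\le c$. Note first that $Q=(0,T)\times\Omega$ has finite measure, since $T<\infty$ and $\Omega$ is bounded, so it suffices to establish equi-absolute-continuity of the integrals $\di\int_E|z^j|\,dx\,dt$ together with a uniform $L^1(Q)$ bound.

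First I would fix $\varepsilon>0$ and invoke the condition $\di\lim_{|a|\to\infty}\inf_{x\in\Omega}\frac{M(x,a)}{|a|}=\infty$ to choose $R=R(\varepsilon)>0$ so large that
\begin{equation*}
M(x,a)\ge \frac{c}{\varepsilon}\,|a|\qquad\text{for all }x\in\bar\Omega\text{ and all }a\in\R^d\text{ with }|a|\ge R.
\end{equation*}
The point is that the threshold $R$ depends only on $\varepsilon$ (and on $c$), not on $x$, precisely because the $\inf$ over $x$ is built into the growth assumption. Then on the super-level set $\{(t,x)\in Q:\ |z^j(t,x)|\ge R\}$ we have the pointwise bound $|z^j|\le \frac{\varepsilon}{c}\,M(x,z^j)$, whence
\begin{equation*}
\int_{\{|z^j|\ge R\}}|z^j|\,dx\,dt\le \frac{\varepsilon}{c}\int_Q M(x,z^j)\,dx\,dt\le \varepsilon
\end{equation*}
uniformly in $j\in\N$.

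Finally, for an arbitrary measurable $E\subset Q$ I would split $E$ according to whether $|z^j|<R$ or $|z^j|\ge R$ and use the previous estimate on the second piece:
\begin{equation*}
\int_E |z^j|\,dx\,dt\le R\,|E|+\int_{\{|z^j|\ge R\}}|z^j|\,dx\,dt\le R\,|E|+\varepsilon.
\end{equation*}
Taking $E=Q$ gives the uniform bound $\sup_{j\in\N}\|z^j\|_{L^1(Q)}\le R\,|Q|+\varepsilon<\infty$, and choosing $\delta:=\varepsilon/R$ yields $\sup_{j\in\N}\int_E|z^j|\,dx\,dt\le 2\varepsilon$ whenever $|E|<\delta$. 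Since $\varepsilon>0$ was arbitrary, $\{z^j\}$ is uniformly integrable in $L^1(Q)$. There is no real obstacle in this argument; the only structural input is that the superlinear growth of $M$ holds uniformly in $x$, which is exactly the hypothesis in force, so nothing further is needed.
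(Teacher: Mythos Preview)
Your argument is correct: it is exactly the de~la~Vall\'ee--Poussin mechanism, and the only structural ingredient you need---that the superlinear growth $\lim_{|a|\to\infty}\inf_{x\in\Omega}M(x,a)/|a|=\infty$ is uniform in $x$---is part of the standing definition of an $N$--function in the paper. The paper itself does not supply a proof of this lemma but refers to \cite{GwSw2008}; your argument is the standard one and is almost certainly what is recorded there, so there is nothing to add.
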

\begin{proposition}\label{product}
Let $M$ be an  ${ N}$--function and $M^\ast$ its complementary
function. Suppose that the sequences $\psi^j:Q\to\R^d$ and
$\phi^j:Q\to\R^d$ are uniformly bounded in $L_M(Q)$ and
$L_{M^\ast}(Q)$ respectively. Moreover $\psi^j\modular{M}\psi$
modularly in $L_M(Q)$ and $\phi^j\modular{M^\ast}\phi$  modularly
in $L_{M^\ast}(Q)$. Then $\psi^j\cdot\phi^j\to\psi\cdot\phi$
strongly in $L^1(Q)$.
\end{proposition}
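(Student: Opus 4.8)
The plan is to obtain the $L^1$ convergence from the Vitali convergence theorem. Since $Q$ has finite measure, it suffices to establish two things: (i) along an arbitrary subsequence one can extract a further subsequence with $\psi^j\cdot\phi^j\to\psi\cdot\phi$ almost everywhere, and (ii) $\{\psi^j\cdot\phi^j\}_j$ is uniformly integrable in $L^1(Q)$. Observe first that the statement is meaningful: $\psi\in L_M(Q)$ and $\phi\in L_{M^\ast}(Q)$ as modular limits of norm-bounded sequences, so $\psi\cdot\phi\in L^1(Q)$ by the H\"older inequality \eqref{hoelder}, and in any case $\psi,\phi$ are a.e.\ finite so $\psi\cdot\phi$ is a well-defined measurable function.

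For the almost-everywhere convergence I would invoke Lemma~\ref{modular-conv}: modular convergence implies $\psi^j\to\psi$ and $\phi^j\to\phi$ in measure on $Q$. Given any subsequence, one extracts a further subsequence along which both $\psi^j\to\psi$ and $\phi^j\to\phi$ almost everywhere, whence $\psi^j\cdot\phi^j\to\psi\cdot\phi$ almost everywhere along it.

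The core of the argument is the uniform integrability of the products. By Lemma~\ref{modular-conv} again, there exist $\lambda_\psi>0$ and $\lambda_\phi>0$ such that $\{M(x,\lambda_\psi\psi^j)\}_j$ and $\{M^\ast(x,\lambda_\phi\phi^j)\}_j$ are uniformly integrable in $L^1(Q)$. Put $\lambda:=\min\{\lambda_\psi,\lambda_\phi\}$. By convexity of $M(x,\cdot)$ and $M^\ast(x,\cdot)$ and the normalisations $M(x,0)=M^\ast(x,0)=0$ one has $M(x,\lambda\psi^j)\le M(x,\lambda_\psi\psi^j)$ and $M^\ast(x,\lambda\phi^j)\le M^\ast(x,\lambda_\phi\phi^j)$ pointwise, so $\{M(x,\lambda\psi^j)\}_j$ and $\{M^\ast(x,\lambda\phi^j)\}_j$ are uniformly integrable. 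Applying the Fenchel--Young inequality \eqref{F-Y} to the pair $\bigl(\lambda\psi^j(t,x),\lambda\phi^j(t,x)\bigr)$ and dividing by $\lambda^2$ yields
\begin{equation*}
|\psi^j\cdot\phi^j|\;\le\;\frac{1}{\lambda^{2}}\Bigl(M(x,\lambda\psi^j)+M^{\ast}(x,\lambda\phi^j)\Bigr)\qquad\text{a.e.\ in }Q .
\end{equation*}
The right-hand side, a fixed multiple of a sum of uniformly integrable sequences, is uniformly integrable, hence so is $\{\psi^j\cdot\phi^j\}_j$. Combining this with the a.e.\ convergence along subsequences and Vitali's theorem on the finite-measure space $Q$, every subsequence has a sub-subsequence converging to $\psi\cdot\phi$ in $L^1(Q)$, so the whole sequence converges: $\psi^j\cdot\phi^j\to\psi\cdot\phi$ in $L^1(Q)$.

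The only point I expect to require care is the scaling in the Fenchel--Young step. Using \eqref{F-Y} in the unbalanced form $|\psi^j\cdot\phi^j|\le M(x,t\psi^j)+M^{\ast}(x,\phi^j/t)$ would force the admissible multipliers of the two factors to be mutually reciprocal, which cannot in general be arranged in the absence of a $\Delta_2$-condition; scaling both factors by the common $\lambda$ and pulling the constant $\lambda^{-2}$ outside removes this difficulty. It is precisely here that one must use the full strength of modular convergence, through Lemma~\ref{modular-conv}, rather than mere boundedness in $L_M(Q)$ and $L_{M^\ast}(Q)$.
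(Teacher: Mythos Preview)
Your argument is correct. The paper does not actually prove this proposition: it is stated in the appendix together with Lemma~\ref{modular-conv}, Lemma~\ref{uni-int}, Proposition~\ref{ae} and Proposition~\ref{sup}, and the reader is simply referred to~\cite{GwSw2008} for the proofs. The route you take---convergence in measure from Lemma~\ref{modular-conv}, uniform integrability of the products via the Fenchel--Young inequality applied to $(\lambda\psi^j,\lambda\phi^j)$, and then Vitali's theorem---is precisely the standard argument used in that reference, so there is nothing to compare.
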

\begin{proposition}\label{ae}
Let $ K^j$ be a standard mollifier, i.e., $ K\in
C^\infty(\R),$ $ K$ has a compact support and
$\int_\R K(\tau)d\tau=1,  K(t)= K(-t)$. We define
$ K^j(t)=j K(jt).$ Moreover let $\ast$ denote a
convolution in the variable $t$. Then for any  function
$\psi:Q\to\R^d$ such that $\psi\in L^1(Q)$ it holds
$$(\vrho^j\ast\psi)(t,x)\to\psi(t,x)\quad\mbox{in measure}.
$$
\end{proposition}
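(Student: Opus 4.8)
The plan is to prove the stronger assertion that $K^j \ast \psi \to \psi$ strongly in $L^1(Q)$; convergence in measure then follows immediately from Chebyshev's inequality, since for every $\lambda > 0$
\begin{equation*}
\left|\left\{(t,x)\in Q : |(K^j\ast\psi)(t,x)-\psi(t,x)|>\lambda\right\}\right|\le\frac1\lambda\|K^j\ast\psi-\psi\|_{L^1(Q)},
\end{equation*}
and $|Q|<\infty$ because $\Omega$ is bounded and $T<\infty$. Before starting I would extend $\psi$ by zero outside the time interval, so that $\psi\in L^1(\R\times\Omega)$ and $(K^j\ast\psi)(t,x)=\int_\R K^j(t-s)\psi(s,x)\,ds$ is well defined for every $t\in\R$; it then suffices to show $K^j\ast\psi\to\psi$ in $L^1(\R\times\Omega)$, which a fortiori gives the convergence on $Q$. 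Note that the space variable $x$ enters only as a passive parameter, so this is exactly the classical Friedrichs mollification of an $L^1$ function in one variable.

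The core is the standard approximate-identity argument. First I would record Young's inequality in the $t$ variable: since $K\in{\cal C}^\infty_c(\R)$ we have $\|K^j\|_{L^1(\R)}=\|K\|_{L^1(\R)}=:c_0$ independently of $j$, hence $\|K^j\ast h\|_{L^1(\R\times\Omega)}\le c_0\|h\|_{L^1(\R\times\Omega)}$ for all $h\in L^1(\R\times\Omega)$. Given $\eta>0$, choose $g\in{\cal C}_c(Q)$ with $\|\psi-g\|_{L^1(Q)}<\eta$ (continuous compactly supported functions being dense in $L^1$). Decomposing $K^j\ast\psi-\psi=K^j\ast(\psi-g)+(K^j\ast g-g)+(g-\psi)$, the first and last terms together have $L^1$ norm at most $(c_0+1)\eta$ by the Young bound, while for the middle term the zero-extension of $g$ is uniformly continuous, so $\|K^j\ast g-g\|_{L^\infty}\to0$ and therefore $\|K^j\ast g-g\|_{L^1(Q)}\le|Q|\,\|K^j\ast g-g\|_{L^\infty}\to0$. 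Hence $\limsup_j\|K^j\ast\psi-\psi\|_{L^1(Q)}\le(c_0+1)\eta$, and letting $\eta\to0$ concludes.

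An equivalent shortcut, using $\int_\R K=1$, is to write $(K^j\ast\psi)(t,x)-\psi(t,x)=\int_\R K(\tau)\bigl(\psi(t-\tau/j,x)-\psi(t,x)\bigr)\,d\tau$, so that $\|K^j\ast\psi-\psi\|_{L^1}\le\int_\R|K(\tau)|\,\|\psi(\cdot-\tau/j,\cdot)-\psi\|_{L^1}\,d\tau$, and then apply continuity of translations in $L^1$ together with dominated convergence (the integrand is $\le2\|\psi\|_{L^1}|K(\tau)|$ and tends to $0$ pointwise in $\tau$).

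I do not expect a genuine obstacle here: the only points that need attention are the uniform-in-$j$ estimate supplied by Young's inequality and the harmless zero-extension of $\psi$ across the endpoints $t=0$ and $t=T$, which is needed only to give meaning to the time convolution on all of $Q$; everything else is the textbook computation for mollifiers, so I would keep the write-up brief and, if desired, refer to a standard reference for the two classical facts (Young's inequality and $L^1$-continuity of translations).
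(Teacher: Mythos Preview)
Your argument is correct: proving $L^1(Q)$-convergence via the standard approximate-identity estimate (Young's inequality plus density of ${\cal C}_c(Q)$, or equivalently $L^1$-continuity of translations) and then invoking Chebyshev is exactly the right route, and the zero-extension in time is handled properly. The paper itself does not give a proof of this proposition; it simply refers the reader to~\cite{GwSw2008} for the proofs of this block of auxiliary facts, so there is nothing to compare against beyond noting that your write-up supplies what the paper outsources.
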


\begin{proposition}\label{sup}
Let $ K^j$ be defined as in Proposition \ref{ae}. Given an
 ${ N}$--function $M$ and a   function $\psi:Q\to\R^d$ such that
$\psi\in{\mathcal L}_M(Q)$,
the sequence $\{M(\vrho^j\ast\psi)\}$
is uniformly integrable.
\end{proposition}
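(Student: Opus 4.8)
The plan is to prove Proposition~\ref{sup} by exploiting the convexity of $M$ together with the Jensen inequality applied to the convolution, reducing uniform integrability of $\{M(\varrho^j\ast\psi)\}$ to uniform integrability of a fixed dominating function, which is available because $\psi\in{\cal L}_M(Q)$ means $M(x,\psi)\in L^1(Q)$, hence is itself uniformly integrable by the Dunford--Pettis-type characterization.

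First I would fix a mollification index $j$ and write, for a.e. $(t,x)\in Q$,
\begin{equation*}
(\varrho^j\ast\psi)(t,x)=\int_\R \varrho^j(\tau)\psi(t-\tau,x)\ud\tau,
\end{equation*}
where $\varrho^j\ge 0$ and $\int_\R\varrho^j(\tau)\ud\tau=1$ (after possibly extending $\psi$ by zero outside $(0,T)$, which does not affect the modular since $M(x,0)=0$). Because $M(x,\cdot)$ is convex for a.a.\ $x$ and $\varrho^j(\tau)\ud\tau$ is a probability measure, Jensen's inequality gives
\begin{equation*}
M\bigl(x,(\varrho^j\ast\psi)(t,x)\bigr)\le \int_\R \varrho^j(\tau)\,M\bigl(x,\psi(t-\tau,x)\bigr)\ud\tau=\bigl(\varrho^j\ast g\bigr)(t,x),
\end{equation*}
where $g(t,x):=M(x,\psi(t,x))\in L^1(Q)$ (again extended by zero in $t$). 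So the whole family $\{M(\varrho^j\ast\psi)\}_j$ is dominated, pointwise a.e., by $\{\varrho^j\ast g\}_j$, a family of mollifications of a single $L^1$ function.

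Next I would invoke the standard fact that if $g\in L^1(Q)$ then the mollified family $\{\varrho^j\ast g\}_j$ is uniformly integrable in $L^1(Q)$: indeed $\varrho^j\ast g\to g$ strongly in $L^1$, a strongly $L^1$-convergent sequence together with its limit forms a uniformly integrable family (equivalently, one checks directly that $\int_{\{|\varrho^j\ast g|\ge R\}}|\varrho^j\ast g|\le \|\varrho^j\ast g-g\|_{L^1}+\int_{\{|\varrho^j\ast g|\ge R\}}|g|$, and $\mathrm{meas}\{|\varrho^j\ast g|\ge R\}\le R^{-1}\|g\|_{L^1}$ is small uniformly in $j$, so the absolute continuity of the integral of $g$ finishes it). Since $0\le M(\varrho^j\ast\psi)\le \varrho^j\ast g$ pointwise, uniform integrability of the dominating family immediately transfers to $\{M(\varrho^j\ast\psi)\}_j$: for any measurable $E\subset Q$, $\int_E M(\varrho^j\ast\psi)\le\int_E(\varrho^j\ast g)$, so smallness of the latter uniformly in $j$ (for $\mathrm{meas}(E)$ small) gives the same for the former, which is exactly the claimed uniform integrability.

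The only genuine subtlety — and the step I expect to require the most care — is the handling of the time-endpoint effects: $\psi$ is defined on $Q=(0,T)\times\Omega$ while the convolution kernel $\varrho^j$ acts in $t\in\R$, so one must extend $\psi$ (equivalently $g$) by zero outside $(0,T)$ and check that this does not spoil either the Jensen bound (it does not, since $M(x,0)=0$ so $g$ extended by zero is still exactly $M(x,\tilde\psi)$ for the zero-extended $\tilde\psi$) or the $L^1$-convergence of $\varrho^j\ast g$ on $Q$. Everything else is the convexity–Jensen estimate plus the elementary uniform-integrability argument for mollifications of an $L^1$ function, so no delicate harmonic analysis is needed.
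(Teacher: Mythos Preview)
Your argument is correct. The paper itself does not prove Proposition~\ref{sup}; it simply cites \cite{GwSw2008} for the proof, so there is nothing in the paper to compare against directly. Your route---Jensen's inequality in the time variable (crucially using that $M$ depends on $x$ but not on $t$, so $M(x,\varrho^j\ast\psi)\le \varrho^j\ast M(x,\psi)$), followed by the observation that time-mollifications of the fixed $L^1$ function $g=M(x,\psi)$ converge strongly in $L^1(Q)$ and are therefore uniformly integrable, and then transfer of uniform integrability by pointwise domination---is exactly the standard proof and is what one finds in the cited reference. The remark about extending $\psi$ by zero in $t$ and the fact that $M(x,0)=0$ keeps the Jensen bound intact is the right way to handle the endpoints.
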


The next  lemma  is the main tool for showing that  the limits of approximate sequences are in the graph ${\cal A}$ provided that the graph is maximal monotone. This lemma in such a form was formulated in 
\cite{BuGwMaRaSw2012}, see also \cite{Sw2013}. 
\begin{lemma}\label{Minty2}
Let $\mathcal{A}$ be maximal monotone $M$-graph.
 Assume that there are
sequences $\{A^n\}_{n=1}^\infty$ and
$\{\nabla u^n\}_{n=1}^{\infty}$ defined on $Q$ such that 
the following conditions hold:
\begin{align}\label{11}
(\nabla u^n(t,x),A^n(t,x))& \in \mathcal{A}(t,x) &&\textrm{ a.e. in } Q,\\\label{22}
\nabla u^n &\weakstar \nabla u &&\textrm{ weakly}^*  \textrm{in } L_M(Q),\\
A^n &\weakstar A &&\textrm{ weakly}^*  \textrm{in }  L_{M^*}(Q),\label{33}\\\label{44}
\limsup_{n\to \infty} \int_{Q}A^n \cdot \nabla u^n \,d x\,dt &\le \int_{Q} A \cdot \nabla u \, d x\,dt.
\end{align}
Then 
\begin{equation*}
(\nabla u(t,x),A(t,x))\in \mathcal{A}(t,x)\quad \textrm{ a.e. in } Q,\label{Minty2-2}
\end{equation*}
\end{lemma}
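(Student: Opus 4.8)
The plan is to reduce the statement to a pointwise version of the maximal monotonicity property, following the classical Minty-type trick adapted to the modular/Musielak–Orlicz setting. First I would fix an arbitrary measurable selection: by assumption (A5) (say the first alternative holds) there is a measurable $\tilde A:Q\times\R^d\to\R^d$ with $(B,\tilde A(t,x,B))\in\mathcal A(t,x)$ for every $B\in\R^d$. For any such $B$, monotonicity (A2) applied to the pairs $(\nabla u^n,A^n)$ and $(B,\tilde A(t,x,B))$ gives
\begin{equation*}
\int_Q \bigl(A^n-\tilde A(\cdot,\cdot,B)\bigr)\cdot\bigl(\nabla u^n-B\bigr)\,dx\,dt\ge 0.
\end{equation*}
The goal is to pass to the limit $n\to\infty$ in this inequality. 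Expanding the product, the cross terms $\int_Q A^n\cdot B$ and $\int_Q \tilde A(\cdot,\cdot,B)\cdot\nabla u^n$ converge by the weak-$*$ convergences \eqref{22}, \eqref{33}, using that $B$ (constant, hence in $E_M$) and $\tilde A(\cdot,\cdot,B)$ act as admissible test functions against $L_{M^*}$ and $L_M$ respectively. The term $\int_Q \tilde A(\cdot,\cdot,B)\cdot B$ is independent of $n$, and the only delicate term is $\int_Q A^n\cdot\nabla u^n$, for which hypothesis \eqref{44} provides exactly the needed bound $\limsup_n\int_Q A^n\cdot\nabla u^n\le\int_Q A\cdot\nabla u$. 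Combining, we obtain
\begin{equation*}
\int_Q \bigl(A-\tilde A(\cdot,\cdot,B)\bigr)\cdot\bigl(\nabla u-B\bigr)\,dx\,dt\ge 0.
\end{equation*}

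Next I would localize this integral inequality to a pointwise one. Since $B\in\R^d$ is arbitrary, the standard approach is to replace $B$ by a simple function of the form $B\bbbone_{E}+\nabla u\,\bbbone_{Q\setminus E}$ for measurable $E\subset Q$ — so that the integrand vanishes off $E$ — which yields $\int_E (A-\tilde A(\cdot,\cdot,B))\cdot(\nabla u-B)\,dx\,dt\ge 0$ for all such $E$ and all constant $B$, hence by a density/exhaustion argument for all measurable $\R^d$-valued simple functions $B(t,x)$. One must check that such a step function lies in $L_M(Q)$ (it does, being bounded on $E$ and equal to $\nabla u\in L_M(Q)$ elsewhere) and that $\tilde A(\cdot,\cdot,B(\cdot,\cdot))$ lies in $L_{M^*}(Q)$ — here the coercivity estimate \eqref{316} together with $M(x,B)\in L^1$ (assumption \eqref{int}) controls $M^*(x,\tilde A(\cdot,\cdot,B))$. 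Running this over a countable dense family of values of $B$ and invoking the Lebesgue differentiation theorem, we conclude that for a.e. $(t,x)\in Q$,
\begin{equation*}
\bigl(A(t,x)-\tilde A(t,x,B)\bigr)\cdot\bigl(\nabla u(t,x)-B\bigr)\ge 0\qquad\text{for all }B\in\R^d .
\end{equation*}

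Finally, fixing such a point $(t,x)$, the inequality above says precisely that the pair $(\nabla u(t,x),A(t,x))$ tests monotonically against the whole graph $\mathcal A(t,x)$ (since $\tilde A(t,x,\cdot)$ exhausts a monotone selection through every first coordinate, and by (A2)–(A3) monotone testing against this selection suffices). Maximality (A3) then forces $(\nabla u(t,x),A(t,x))\in\mathcal A(t,x)$, which is the claim. In the second alternative of (A5), where the measurable selection $\tilde\xi(t,x,\cdot)$ is given, one argues symmetrically, testing with pairs $(\tilde\xi(t,x,C),C)$ and perturbing $C$ instead.

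The main obstacle I anticipate is the passage from the single integral inequality to the pointwise inequality for \emph{all} $B$ simultaneously: one needs the step functions $B(\cdot,\cdot)$ to be genuinely admissible (i.e. $B\in L_M$ and $\tilde A(\cdot,\cdot,B)\in L_{M^*}$) without any $\Delta_2$-type convenience, which is exactly where assumption \eqref{int} and the $M$-graph coercivity \eqref{316} must be used carefully, and then to handle the null sets uniformly over a countable dense set of $B$-values before taking the supremum over all $B\in\R^d$ by continuity of $\tilde A(t,x,\cdot)$.
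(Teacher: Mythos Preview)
The paper does not give its own proof of this lemma: it is stated in the appendix as a tool, with the proof delegated to the references \cite{BuGwMaRaSw2012} and \cite{Sw2013}. Your Minty-trick outline is indeed the standard strategy, so at the level of overall approach there is nothing to compare and you are on the right track.

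Two technical points in your sketch need more care, however, and they are sharper than the obstacle you flag at the end. First, to pass to the limit in $\int_Q \tilde A(\cdot,\cdot,B)\cdot\nabla u^n$ via the weak-$*$ convergence $\nabla u^n\weakstar\nabla u$ in $L_M=(E_{M^*})^*$, the test function must lie in $E_{M^*}$, not merely in $L_{M^*}$; the coercivity (A4) only yields $\int_Q M^*(x,\tilde A(\cdot,\cdot,B))\,dx\,dt<\infty$, and without a $\Delta_2$-condition on $M^*$ this does not place $\tilde A(\cdot,\cdot,B)$ in $E_{M^*}$. Second, your localization via $\psi=B\bbbone_E+\nabla u\,\bbbone_{Q\setminus E}$ requires re-running the limit argument with this $\psi$, which would need $\psi\in E_M$ --- but $\nabla u$ is only in $L_M$. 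The alternative of integrating the pointwise monotonicity inequality directly over an arbitrary $E\subset Q$ leaves the term $\int_E A^n\cdot\nabla u^n$, and hypothesis \eqref{44} controls only the integral over all of $Q$. The usual remedy in the cited works is Chacon's biting lemma: since $A^n\cdot\nabla u^n\ge 0$ (from (A1)--(A2)) is bounded in $L^1(Q)$, one extracts weak $L^1$-convergence of $A^n\cdot\nabla u^n$ on sets $Q\setminus E_j$ with $|E_j|\to 0$; combining this with \eqref{44} and weak lower semicontinuity of the modulars identifies the biting limit as $A\cdot\nabla u$, after which the pointwise Minty inequality follows on each $Q\setminus E_j$ and hence a.e.\ in $Q$.
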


\noindent
Finally we summarize some properties of selections. 
\begin{lemma}\label{LS*}
Let $\mathcal{A}(t,x)$ be maximal monotone $M$-graph satisfying {(A1)}--{ (A5)} with measurable selection
$\tilde A:Q\times \mathbb{R}^{d} \to \mathbb{R}^{d}$. Then $\tilde A$
satisfies the following conditions:
\begin{enumerate}
\item [{\it (a1)}]$\mathrm{Dom}\, \tilde A(t,x,\cdot) = \Rd$ a.e. in $Q$;
\item [{\it (a2)}]$\tilde A$ is  monotone, i.e. for every $\xi_1$,
$\xi_2 \in \Rd$ and a.a. $(t,x)\in Q$
\begin{equation} \label{monot}
(\tilde A(t,x,\xi_1) - \tilde A(t,x,\xi_2))\cdot( \xi_1 - \xi_2) \ge 0;
\end{equation}
\item [{\it (a3)}] There are non-negative $k\in
L^1(Q)$,   $c_*>0$ and $N$-function  $M$ such that for all $\nabla u\in\Rd$ the function
 $\tilde A$ satisfies
\begin{equation} \label{growthS*}
\tilde A \cdot \nabla u \geq -k(t,x) +c_*(M(x,\nabla u) +M^*(x,\tilde A))
\end{equation}
\end{enumerate}
Moreover, let
$U$ be a dense set in $\Rd$ and
$(\bB,\tilde A(t,x,\bB)) \in \mA(t,x)$ for a.a. $(t,x)\in Q$ and for all $\bB\in U$. Let also
  $(\nabla u, \bS)\in \Rd \times \Rd$. Then the following conditions are equivalent:
\begin{equation}\begin{split}
\textrm{(i)} \quad &({\bS} - \tilde A(t,x,\bB))\cdot( {\nabla u} - \bB) \geq 0 \quad \textrm{ for all } \quad (\bB,\tilde A(t,x,\bB))\in\mA(t,x)\,,\\
\textrm{(ii)} \quad & (\nabla u, \bS) \in \mA(t,x).
\end{split} \label{maxmon}
\end{equation}
\end{lemma}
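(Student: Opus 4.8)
\textbf{Plan for the proof of Lemma~\ref{LS*}.}
The plan is to split the statement into two parts: the properties (a1)--(a3) of the selection $\tilde A$, and the equivalence \eqref{maxmon}. For the first part, (a1) is immediate because (A5) hands us a function $\tilde A:Q\times\R^d\to\R^d$ defined on all of $\R^d$ with $(\xi,\tilde A(t,x,\xi))\in\mA(t,x)$; the domain is therefore all of $\R^d$. Property (a2) is obtained by applying (A2) to the two pairs $(\xi_1,\tilde A(t,x,\xi_1))$ and $(\xi_2,\tilde A(t,x,\xi_2))$, both of which lie in $\mA(t,x)$ by construction, giving precisely \eqref{monot}. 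Property (a3) follows the same way from (A4): since $(\nabla u,\tilde A(t,x,\nabla u))\in\mA(t,x)$, the inequality in (A4) reads exactly \eqref{growthS*}. So this first block is essentially a bookkeeping exercise unpacking (A1)--(A5).

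For the equivalence \eqref{maxmon}, the implication (ii)$\Rightarrow$(i) is immediate from monotonicity (A2): if $(\nabla u,\bS)\in\mA(t,x)$ then for every $\bB\in U$ the pair $(\bB,\tilde A(t,x,\bB))$ is also in $\mA(t,x)$, so (A2) gives $({\bS}-\tilde A(t,x,\bB))\cdot(\nabla u-\bB)\ge 0$. The substantial direction is (i)$\Rightarrow$(ii). The natural route is to use maximality (A3), which says that if $({A_1}-\bS)\cdot({\xi_1}-\nabla u)\ge 0$ for \emph{all} $(\xi_1,A_1)\in\mA(t,x)$, then $(\nabla u,\bS)\in\mA(t,x)$. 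Hypothesis (i) only gives us this inequality for the special pairs $(\bB,\tilde A(t,x,\bB))$ with $\bB\in U$, so the work is to upgrade from the dense set $U$ of first coordinates with the specific selection values, to \emph{all} pairs in the graph.

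The key step is a density/limiting argument: given an arbitrary $(\xi_1,A_1)\in\mA(t,x)$, pick a sequence $\bB_m\in U$ with $\bB_m\to\xi_1$. By (i) we have $(\bS-\tilde A(t,x,\bB_m))\cdot(\nabla u-\bB_m)\ge 0$ for each $m$. I would then want to pass to the limit $m\to\infty$. The term $(\nabla u-\bB_m)\to(\nabla u-\xi_1)$ is fine; the issue is the behaviour of $\tilde A(t,x,\bB_m)$, since $\tilde A$ need not be continuous in $\xi$. Here one uses local boundedness of monotone operators on $\R^d$ (an interior-point argument: a monotone map with full domain is locally bounded), so $\{\tilde A(t,x,\bB_m)\}$ is bounded; extract a convergent subsequence $\tilde A(t,x,\bB_{m_j})\to \eta$. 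Monotonicity (a2) of $\tilde A$ together with $\bB_{m_j}\to\xi_1$ forces, via a standard Minty-type argument, that $(\xi_1,\eta)\in\mA(t,x)$ — indeed $(\tilde A(t,x,\bB_{m_j})-\tilde A(t,x,\zeta))\cdot(\bB_{m_j}-\zeta)\ge0$ for all $\zeta\in\R^d$, and passing to the limit then invoking maximality pins down $\eta$ as an admissible value at $\xi_1$. Now in the inequality $(\bS-\tilde A(t,x,\bB_{m_j}))\cdot(\nabla u-\bB_{m_j})\ge0$ let $j\to\infty$ to get $(\bS-\eta)\cdot(\nabla u-\xi_1)\ge0$. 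Since $(\xi_1,\eta)\in\mA(t,x)$ and, by monotonicity of the graph applied to $(\xi_1,\eta)$ and any $(\xi_1,A_1)\in\mA(t,x)$ with the same first coordinate, $\bS$ behaves consistently, one deduces $(\bS-A_1)\cdot(\nabla u-\xi_1)\ge0$ for the original arbitrary pair. Having established this for all $(\xi_1,A_1)\in\mA(t,x)$, maximality (A3) yields $(\nabla u,\bS)\in\mA(t,x)$, which is (ii).

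\textbf{Main obstacle.} The delicate point is precisely the passage to the limit through the selection $\tilde A$: since no continuity of $\tilde A$ in the last variable is assumed, one must invoke local boundedness of finite-dimensional monotone operators and a Minty-type closedness argument to control $\tilde A(t,x,\bB_m)$ along an approximating sequence. All the measurability bookkeeping in $(t,x)$ is routine given (A5), so the genuine content is this one-point (for fixed $(t,x)$) convex-analysis limit argument.
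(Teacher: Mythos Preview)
The paper does not supply its own proof of this lemma; it simply refers to \cite{BuGwMaSw2012}. Your treatment of (a1)--(a3) and of the easy implication (ii)$\Rightarrow$(i) is correct and is exactly the natural unpacking of (A1)--(A5).

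For (i)$\Rightarrow$(ii) your overall strategy---approximate an arbitrary $(\xi_1,A_1)\in\mA(t,x)$ by pairs $(\bB_m,\tilde A(\bB_m))$ with $\bB_m\in U$, use local boundedness of a monotone map with full domain to extract a cluster point $\eta$ of $\tilde A(\bB_m)$, and record that $(\xi_1,\eta)\in\mA(t,x)$ by closedness of the maximal monotone graph---is the right one. The genuine gap is the last step: from $(\bS-\eta)\cdot(\nabla u-\xi_1)\ge 0$ you claim to deduce $(\bS-A_1)\cdot(\nabla u-\xi_1)\ge 0$ ``by monotonicity of the graph applied to $(\xi_1,\eta)$ and $(\xi_1,A_1)$ with the same first coordinate''. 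But monotonicity between two pairs sharing the same first coordinate yields only $0\ge 0$ and says nothing about how $\bS$ interacts with $A_1$ versus $\eta$. Since the limit $\eta$ depends on the direction from which $\bB_m$ approaches $\xi_1$ and need not equal the given $A_1$, this passage is unjustified as written.

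The missing ingredient is a choice of approach direction. Take $\bB_m\in U$ converging to $\xi_1$ along the ray towards $\nabla u$, i.e.\ with $(\bB_m-\xi_1)/|\bB_m-\xi_1|\to(\nabla u-\xi_1)/|\nabla u-\xi_1|$ (the case $\nabla u=\xi_1$ being trivial). Monotonicity of $\mA$ between $(\bB_m,\tilde A(\bB_m))$ and $(\xi_1,A_1)$, divided by $|\bB_m-\xi_1|$ and sent to the limit, gives $(\eta-A_1)\cdot(\nabla u-\xi_1)\ge 0$; adding this to $(\bS-\eta)\cdot(\nabla u-\xi_1)\ge 0$ produces the desired $(\bS-A_1)\cdot(\nabla u-\xi_1)\ge 0$, and then maximality (A3) closes the argument. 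Equivalently one can run the Minty device with $\bB_t=(1-t)\nabla u+t\xi_1$. Your plan already contains all the surrounding machinery; it is only this directional refinement that is missing.
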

For the proof see~\cite{BuGwMaSw2012}.

\bigskip
\noindent
{\bf Acknowledgements }\\
The author was supported by the grant IdP2011/000661.

\noindent

\bibliographystyle{abbrv}
\bibliography{genorlicz}

\def\ocirc#1{\ifmmode\setbox0=\hbox{$#1$}\dimen0=\ht0 \advance\dimen0
  by1pt\rlap{\hbox to\wd0{\hss\raise\dimen0
  \hbox{\hskip.2em$\scriptscriptstyle\circ$}\hss}}#1\else {\accent"17 #1}\fi}
\begin{thebibliography}{10}

\bibitem{Be2011}
A.~Benkirane, J.~Douieb, and M.~Ould Mohamedhen~Val.
\newblock An approximation theorem in {M}usielak-{O}rlicz-{S}obolev spaces.
\newblock {\em Comment. Math.}, 51(1):109--120, 2011.

\bibitem{BeEl1999}
A.~Benkirane and A.~Elmahi.
\newblock An existence theorem for a strongly nonlinear elliptic problem in
  {O}rlicz spaces.
\newblock {\em Nonlinear Anal.}, 36(1, Ser. A: Theory Methods):11--24, 1999.

\bibitem{BuGwMaRaSw2012}
M.~Bul{\'\i}\v{c}ek, P.~Gwiazda, J.~M{\'a}lek, K.~R. Rajagopal, and
  A.~{\'S}wierczewska-Gwiazda.
\newblock On flows of fluids described by an implicit constitutive equation
  characterized by a maximal monotone graph.
\newblock In {\em Mathematical Aspects of Fluid Mechanics}, volume 402 of {\em
  London Math. Soc. Lecture Note Ser.}, pages 23--51. Cambridge Univ. Press,
  2012.

\bibitem{BuGwMaSw2009}
M.~Bul{\'\i}\v{c}ek, P.~Gwiazda, J.~M{\'a}lek, and A.~{\'S}wierczewska-Gwiazda.
\newblock {On steady flows of incompressible fluids with implicit
  power-law-like rheology.}
\newblock {\em Adv. Calc. Var.}, 2(2):109--136, 2009.

\bibitem{BuGwMaSw2012}
M.~Bul{\'\i}\v{c}ek, P.~Gwiazda, J.~M{\'a}lek, and A.~{\'S}wierczewska-Gwiazda.
\newblock On unsteady flows of implicitly constituted incompressible fluids.
\newblock {\em SIAM J. Math. Anal.}, 44(4):2756--2801, 2012.

\bibitem{Donaldson}
T.~Donaldson.
\newblock Inhomogeneous {O}rlicz-{S}obolev spaces and nonlinear parabolic
  initial value problems.
\newblock {\em J. Differential Equations}, 16:201--256, 1974.

\bibitem{ElmahiMeskine}
A.~Elmahi and D.~Meskine.
\newblock Parabolic equations in {O}rlicz spaces.
\newblock {\em J. London Math. Soc. (2)}, 72(2):410--428, 2005.

\bibitem{ElMe2005}
A.~Elmahi and D.~Meskine.
\newblock {Strongly nonlinear parabolic equations with natural growth terms in
  Orlicz spaces.}
\newblock {\em Nonlinear Anal., Theory Methods Appl.}, 60(1):1--35, 2005.

\bibitem{Gossez1}
J.-P. Gossez.
\newblock Nonlinear elliptic boundary value problems for equations with rapidly
  (or slowly) increasing coefficients.
\newblock {\em Trans. Amer. Math. Soc.}, 190:163--205, 1974.

\bibitem{Gossez2}
J.-P. Gossez.
\newblock Some approximation properties in {O}rlicz-{S}obolev spaces.
\newblock {\em Studia Math.}, 74(1):17--24, 1982.

\bibitem{GwMaSw2007}
P.~Gwiazda, J.~M{\'a}lek, and A.~{\'S}wierczewska.
\newblock {On flows of an incompressible fluid with a discontinuous
  power-law-like rheology.}
\newblock {\em Comput. Math. Appl.}, 53(3-4):531--546, 2007.

\bibitem{GwMiSw2013}
P.~Gwiazda, P.~Minakowski, and A.~\'Swierczewska-Gwiazda.
\newblock On the anisotropic {O}rlicz spaces applied in the problems of
  continuum mechanics.
\newblock {\em Discrete Contin. Dyn. Syst. Ser. S}, 6(5):1291--1306, 2013.

\bibitem{GwMiWr2012}
P.~Gwiazda, P.~Minakowski, and A.~Wr{{\'o}}blewska-Kami{{\'n}}ska.
\newblock Elliptic problems in generalized {O}rlicz-{M}usielak spaces.
\newblock {\em Cent. Eur. J. Math.}, 10(6):2019--2032, 2012.

\bibitem{GwSw2008}
P.~Gwiazda and A.~{\'S}wierczewska-Gwiazda.
\newblock On non-{N}ewtonian fluids with a property of rapid thickening under
  different stimulus.
\newblock {\em Math. Models Methods Appl. Sci.}, 18(7):1073--1092, 2008.

\bibitem{GwSw2008a}
P.~Gwiazda and A.~{\'S}wierczewska-Gwiazda.
\newblock {On steady non-Newtonian fluids with growth conditions in generalized
  Orlicz spaces.}
\newblock {\em Topol. Methods Nonlinear Anal.}, 32(1):103--113, 2008.

\bibitem{GwSw2010}
P.~Gwiazda and A.~\'Swierczewska-Gwiazda.
\newblock Parabolic equations in anisotropic {O}rlicz spaces with general
  {N}-functions.
\newblock {\em special Birkhaeuser volume ''Parabolic Problems. The Herbert
  Amann Festschrift'', Progress in Nonlinear Differential Equations and Their
  Applications}, 60:301--311, 2010.

\bibitem{GwSwWr2010}
P.~Gwiazda, A.~{\'S}wierczewska-Gwiazda, and A.~Wr{{\'o}}blewska.
\newblock Monotonicity methods in generalized {O}rlicz spaces for a class of
  non-{N}ewtonian fluids.
\newblock {\em Math. Methods Appl. Sci.}, 33(2):125--137, 2010.

\bibitem{GwSwWr2012}
P.~Gwiazda, A.~{\'S}wierczewska-Gwiazda, and A.~Wr{{\'o}}blewska.
\newblock Generalized {S}tokes system in {O}rlicz spaces.
\newblock {\em Discrete Contin. Dyn. Syst.}, 32(6):2125--2146, 2012.

\bibitem{GwZa2007}
P.~Gwiazda and A.~Zatorska-Goldstein.
\newblock {On elliptic and parabolic systems with $x$-dependent multivalued
  graphs.}
\newblock {\em Math. Methods Appl. Sci.}, 30(2):213--236, 2007.

\bibitem{Musielak}
J.~Musielak.
\newblock {\em Orlicz spaces and modular spaces}, volume 1034 of {\em Lecture
  Notes in Mathematics}.
\newblock Springer-Verlag, Berlin, 1983.

\bibitem{Novotny}
A.~Novotn\'y and I.~Stra\v{s}kraba.
\newblock {\em Introduction to the mathematical theory of compressible flow.}
\newblock Oxford Lecture Series in Mathematics and its Applications 27. Oxford:
  Oxford University Press., 2004.

\bibitem{Sw2013}
A.~{\'S}wierczewska-Gwiazda.
\newblock Nonlinear parabolic problems in {M}usielak--{O}rlicz spaces.
\newblock {\em http://arxiv.org/abs/1306.2186}, 06 2013.

\bibitem{Wr2010}
A.~Wr{{\'o}}blewska.
\newblock Steady flow of non-{N}ewtonian fluids---monotonicity methods in
  generalized {O}rlicz spaces.
\newblock {\em Nonlinear Anal.}, 72(11):4136--4147, 2010.

\end{thebibliography}

\end{document}